\sodef\so{}{.14em}{.4em plus.1em minus .1em}{.4em plus.1em minus .1em} 
\newcommand{\nref}[1]{\hyperref[#1]{\ref*{#1}}}
\newcommand{\subtile}[1] 
{
	\vspace{-0.3cm}
	\begin{center}
 		{{\textsc{#1}}}\\
	\end{center}
	\vspace{0.1cm}
}
\newcommand{\imageplain}[2] 
{
      {\includegraphics[scale=#1]{#2.png}}
}
\newcommand{\image}[2] 
{
      \begin{center} 
	\includegraphics[scale=#1]{#2.png}
      \end{center}
}
\newcommand{\hamburger}[4] 
{
  \thispagestyle{empty}
  \vspace*{-2cm}
  \vspace{0.5cm}
  \begin{center}
    \Large \bf
    #1
  \end{center}
  \vspace{0.5cm}
  \begin{center}	
    Simon Lentner \\
    Algebra and Number Theory, 
    University Hamburg,\\
    Bundesstra{\ss}e 55, D-20146 Hamburg \\
    \texttt{simon.lentner@uni-hamburg.de}
  \end{center}
  \vspace{0.5cm}

}
\newcommand{\xleftrightarrow}[2][]{\ext@arrow 3359\leftrightarrowfill@{#1}{#2}}
\newcommand{\xdashrightarrow}[2][]{\ext@arrow 0359\rightarrowfill@@{#1}{#2}}
\newcommand{\xdashleftarrow}[2][]{\ext@arrow 3095\leftarrowfill@@{#1}{#2}}
\newcommand{\xdashleftrightarrow}[2][]{\ext@arrow
3359\leftrightarrowfill@@{#1}{#2}}
\def\rightarrowfill@@{\arrowfill@@\relax\relbar\rightarrow}
\def\leftarrowfill@@{\arrowfill@@\leftarrow\relbar\relax}
\def\leftrightarrowfill@@{\arrowfill@@\leftarrow\relbar\rightarrow}
\def\arrowfill@@#1#2#3#4{%
  $\m@th\thickmuskip0mu\medmuskip\thickmuskip\thinmuskip\thickmuskip
   \relax#4#1
   \xleaders\hbox{$#4#2$}\hfill
   #3$%
}
\newcommand{\rank}{\mbox{rank}}
\renewcommand{\gcd}{\mbox{gcd}}
\newcommand{\lcm}{\mbox{lcm}}
\renewcommand{\mod}{\;\mathrm{mod}\;}
\newcommand{\gr}{\mbox{gr}}
\newcommand{\ord}{\mbox{ord}}
\newcommand{\id}{\mbox{id}}
\newcommand{\B}{\mathcal{B}} 	
\newcommand{\g}{\mathfrak{g}} 	
\renewcommand{\sl}{\mathfrak{sl}}	
\newcommand{\Z}{\mathbb{Z}}  	
\newcommand{\N}{\mathbb{N}}  	
\newcommand{\C}{\mathbb{C}}  	
\newcommand{\Q}{\mathbb{Q}}  	
\renewcommand{\S}{\mathbb{S}}  	
\newcommand{\CC}{\mathcal{C}}   
\renewcommand{\L}{\mathcal{L}} 
\renewcommand{\l}{\ell} 
\theoremstyle{plain}
\newtheorem{theorem}{Theorem}[section]
\newtheorem*{theoremX}{Theorem}
\newtheorem{example}[theorem]{Example}
\newtheorem{lemma}[theorem]{Lemma}
\newtheorem{problem}[theorem]{Problem}
\newtheorem{remark}[theorem]{Remark}
\begin{document}

\enlargethispage{10\baselineskip}
\hamburger{Quantum affine algebras at small root of unity}
{}{}{2014}
\thispagestyle{empty}

\begin{abstract}
  We study the Frobenius-Lusztig kernel for quantum affine algebras at
  root
  of unity of small orders that are usually excluded in literature. These cases
  are somewhat  degenerate and we find that the kernel is in fact mostly related
  to different affine  Lie algebras, some even of larger rank, that
  exceptionally  sit inside the quantum affine algebra. This continues the
  authors study for quantum  groups associated to finite-dimensional Lie
  algebras in [Len14c].
  \end{abstract}
  \makeatletter
  \@setabstract
  \makeatother

\tableofcontents
\vspace{-1cm}
{Partly supported by the DFG Priority Program 1388 ``Representation
theory''}

\newpage

\section{Introduction}

A quantum affine algebra $U_q(\g)$ is a Hopf algebra over the field of
rational functions $\C(q)$ and can be viewed as a deformation of the
universal enveloping of an \emph{affine Lie algebra} $\g$. It is a special case
of the Drinfel'd-Jimbo quantum group and the next logical step after quantum
groups associated to finite-dimensional Lie algebras $\g$. Lusztig has in
\cite{Lusz94} studied an integral form in this situation, which is a Hopf
algebra $U_q^{\Z[q,q^{-1}],\L}(\g)$ over the ring $\Z[q,q^{-1}]$, and one may
again perform a specialization to a specific value $q\in\C^\times$ and thus
obtain a complex Hopf algebra $U_q^{\L}(\g)$. For $q$ not a root of unity these
algebras behave similar to $U_q(\g)$ but for $q$ an $\ell$-th root of unity,
the algebra and its representation theory becomes significantly more
interesting. The theory of quantum affine algebras is much less developed than
the theory for finite Lie algebras $\g$ - Lusztig proves several structural
results in \cite{Lusz94} with certain restrictions on the order of $q$; among
others he defined a ``small quantum group'' $_Ru$ (which is now
infinite-dimensional) and a Frobenius-homomorphism to the ordinary universal
enveloping algebra of $\g$.\\
Notable subsequent results were a PBW-basis established in \cite{Beck94} and a
complete description of the representation theory of the Drinfel'd Jimbo quantum
group in \cite{CP95} and for the specialization in \cite{CP97}, both for $\g$ an
untwisted affine Lie algebra. Among others they prove a factorization theorem 
of representations into representations of the ordinary universal
enveloping algebra of $\g$ and representations with highest weight ``less then
$\ell$''. This is analogous to the respective results for finite-dimensional
$\g$ (and to the Steinberg factorization theorem for
Lie groups over finite fields) and closely related to the Frobenius
homomorphism. In \cite{CP98} the study was extended
to twisted affine Lie algebras. As an application, the authors mention
that e.g. the symmetry of the affine Toda field theories are governed by quantum
affine algebras and they voice the hope that their study could help understand
affine Toda theories with certain specific values of the coupling constant.\\ 

The aim of this article is to clarify the Hopf algebra structure of the
restricted specialization $U_q^{\L}(\g)$ to such values $q$ where Lusztig's
restrictions on $q$ are \emph{violated}. For these \emph{small roots of unity},
the quantum group severely degenerates. The author has in \cite{Len14c}
performed such a study already in the case of $\g$ a finite Lie algebra and
found a similar Frobenius homomorphism to a universal enveloping algebra. In
these degenerate cases, neither the kernel nor the image of the Frobenius
homomorphism are associated to the initial Lie algebra $\g$. As an application
we noted in \cite{Len14c} that e.g. the case $\g=B_n,\ell=4$ seem to be closely
related to the vertex algebra of $n$ symplectic fermions; here the kernel of
the Frobenius homomorphism is a small quantum group of type $A_1^{\times n}$ and
the image is the universal enveloping of $C_n=sp_{2n}$. It is to be expected
that a similar study for affine $\g$ would explain exceptional behaviour for
affine Toda field theories at certain small values of the coupling constants.\\

In this article we concentrate on the study of the kernel of the (yet-to-be
constructed) Frobenius homomorphism for affine $\g$. More precisely, we define a
suitable Hopf subalgebra $u_q^\L(\g)$ and describe its structure. It
turns out to be mostly governed by subsystems of the dual root system, while in
several exotic cases the quantum group itself changes into a quantum group
associated to a different Lie algebra of larger rank. In one set of cases it
even collapses to an infinite tower of quantum groups of finite Lie algebras.
Altogether we find:
\enlargethispage{2cm}
\begin{theoremX}[\nref{thm_main}]
  Let $\g$ be an affine Lie algebra and $q$ and $\ell$-th root of unity. We
  shall define a Hopf subalgebra $u_q^{\L}(\g)^+\subset
  U_q^\L(\g)^+$  with the following properties: 
  \begin{itemize}
    \item $u_q^\L(\g)^+$ consists of all degrees (roots) $\alpha$ with
    $\ell_\alpha\neq 1$. 
    \item Except for cases marked \emph{deaffinized}, $u_q^{\L}(\g)^+$
    is generated by primitives $E_{\alpha_i^{(0)}}$, spanning a braided vector
    space $M$ (in the deaffinized cases $u_q^{\L}(\g)^+$ is an infinite
    extension tower, see Section \nref{sec_A11})
    \item Lusztig's $_Ru\subset u_q^{\L}(\g)^+$ and equality only holds for
    \emph{trivial} and \emph{generic} cases.
    \item Except for cases marked \emph{deaffinized} and
    (possibly) \emph{exotic} $u_q^\L(\g)^+$ is coradically graded and hence maps
    onto the Nichols algebra $\B(M)=u_{q'}(\g^{(0)})^+$. For untwisted affine
    $\g$ we prove (and else conjecture) they are isomorphic.
    \item Except for cases marked \emph{deaffinized}, the $u_q^{\L}(\g)^+$
    fulfills $\ell_i\neq 1$ as well as Lusztig's non-degeneracy condition
    $\ell_{\alpha_i}\geq -a_{ij}+1$. Hence we have determined subalgebras on
    which Lusztig's theory in \cite{Lusz94} may be applied.
  \end{itemize}
\begin{center}\noindent
We explicitly describe the type $\g^{(0)},q'$ of $M,\B(M)$ as follows
\begin{tabular}{ll|lll}
  $\g$ & $\ell$ & $M$ & $q'$ & comment \\
  \hline\hline
  all & $\ell=1,2$ & $\{0\}$ & $q$ & trivial\\
  \hline
  $A_1^{(1)}$ & $\ell=4$ & $A_1^{2\times}$ & $q$ & {deaffinized}\\
  $B_n^{(1)},D_{n+1}^{(2)}$ & $\ell=4$ 
    & $A_1^{2n\times}$ & $q$ & short roots, deaffinized\\
  $C_n^{(1)},A_{2n-1}^{(2)}$ & $\ell=4$
    & $D_n^{(1)}$ & $q$ & short roots\\
  $F_4^{(1)},E_6^{(2)}$ & $\ell=4$
    & $D_4^{(1)}$ & $q$ & short roots\\
  $G_2^{(2)},D_4^{(3)}$ & $\ell=3,6$
    & $A_2^{(1)}$ & $q$ & short roots\\
\end{tabular}

(continues on next page)

\begin{tabular}{ll|lll}
  \hline
  $A_2^{(2)}$ & $\ell=4$
    & $A_1^{2\times}$ & $q$ & very short roots, deaffinized\\
  $A_2^{(1)}$ & $\ell=8$
    & $A_1^{2n\times}$ & $q$ & very short roots\\
  $A_{2n}^{(2)}$ & $\ell=4$
    & $A_1^{2n\times}$ & $q$ & very short roots, deaffinized\\
  $A_{2n}^{(2)}$ & $\ell=8$
    & $A_{2n-1}^{(2)}$ & $q$ &  not very long roots\\
  \hline
  $A^{(2)}_{2}$ & $3$ & $A_2^{(1)}$ & $q$ & exotic\\
  $A^{(2)}_{2}$ & $6$ & $A_2^{(1)}$ & $-q$ & exotic\\
  $A^{(2)}_{2n}$ & $3,6$ & $A^{(2)}_{2n}$ & $q$ & (pseudo-)exotic\\
  $G_2^{(1)}$ & $4$ & $A_3^{(1)}$ & $\bar{q}$ & exotic\\
  $D_4^{(3)}$ & $4$ & $D_4^{(1)}$ & $q,\bar{q},-1$ & exotic\\
\end{tabular}
\end{center}
All cases not included in the list are \emph{generic cases}
$\g^{(0)}=\g$ with $u_q^\L(\g)^+={_R}u^+$.
\end{theoremX}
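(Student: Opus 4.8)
The plan is to establish the structural bullet points in general and then reduce the classification table to a diagram-by-diagram braiding computation. First I would fix a convex PBW basis of $U_q^\L(\g)^+$ in the root vectors $E_\beta$, $\beta$ a positive real or imaginary root (Beck's basis for untwisted $\g$, Beck--Nakajima / Damiani for the twisted types), and attach to each $\beta$ the order $\ell_\beta=\ord(q_\beta^2)$ with $q_\beta=q^{d_\beta}$. I define $u_q^\L(\g)^+$ as the span of all PBW monomials supported on degrees $\beta$ with $\ell_\beta\neq 1$, which is exactly the first bullet. That this is a subalgebra follows from the Levendorskii--Soibelman straightening relations: a $q$-commutator $E_\gamma E_\beta-q^{(\beta,\gamma)}E_\beta E_\gamma$ expands into PBW monomials in roots strictly between $\beta$ and $\gamma$ in the convex order, and the collapse of some intermediate $\ell$ to $1$ is precisely what would let a factor leave the span. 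The Hopf subalgebra property is then read off from Lusztig's explicit coproduct, which writes $\Delta(E_\beta)$ through lower root vectors of the same $\ell$-type.

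Next I would pass to the associated graded Hopf algebra of the coradical filtration of $u_q^\L(\g)^+$, realized in the Yetter--Drinfel'd category over its Cartan part. Its degree-one piece is the braided vector space $M$ with basis the primitives $E_{\alpha_i^{(0)}}$ and diagonal braiding $q_{ij}=q^{(\alpha_i^{(0)},\alpha_j^{(0)})}$ read off from the bilinear form restricted to the surviving simple roots, so the associated graded surjects onto the Nichols algebra $\B(M)$. Heckenberger's classification of diagonal Nichols algebras via arithmetic root systems and Weyl groupoids then identifies the unique generalized Dynkin type $\g^{(0)}$ and parameter $q'$ with $\B(M)=u_{q'}(\g^{(0)})^+$. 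For untwisted $\g$ the full relations of $u_q^\L(\g)^+$ are available (quantum Serre plus convexity), so this surjection is an isomorphism and the algebra is coradically graded; in the twisted cases I would prove only the surjection and leave the isomorphism as the stated conjecture.

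The classification itself is then a finite computation. For each affine Dynkin diagram I would tabulate the symmetrizers $d_i$, hence the short / long / very-short partition, and compute $\ell_\beta$ as a function of $\ell$; the roots with $\ell_\beta\neq 1$ span a sub-system whose simple roots and induced braiding match a row of the table. The trivial rows ($\ell=1,2$) and the generic rows ($\g^{(0)}=\g$) are exactly those in which no $\ell_\beta$ collapses, forcing $u_q^\L(\g)^+={}_R u^+$, while the short- and very-short-root rows arise when the long (resp. not-short) family collapses and truncates the affine diagram to the indicated finite or affine type; on the surviving sub-diagram I would finally verify Lusztig's non-degeneracy inequality $\ell_{\alpha_i}\geq -a_{ij}+1$, giving the last bullet.

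The hard part will be the \emph{exotic} and \emph{deaffinized} rows, where the naive sub-system picture breaks down. In the exotic cases ($G_2^{(1)},\ell=4$ producing $A_3^{(1)}$, and $D_4^{(3)},\ell=4$ with $q'\in\{q,\bar q,-1\}$) the braiding matrix of the surviving simple roots coincides with that of a \emph{larger}-rank diagram, so $\B(M)$ acquires primitive generators and Serre-type relations not visible among the $\alpha_i^{(0)}$, coradical gradedness may fail, and certifying the type requires the full Weyl-groupoid machinery rather than a direct sub-root-system argument. In the deaffinized cases the links to the affine node trivialize, $q_{ij}q_{ji}=1$, so no finite-type Nichols algebra appears and the infinitely many real roots with $\ell_\beta\neq 1$ instead organize into an infinite extension tower; I would analyze this by hand in the prototype $A_1^{(1)},\ell=4$ (Section \nref{sec_A11}) and transport the argument to $B_n^{(1)}$, $A_2^{(2)}$ and $A_{2n}^{(2)}$. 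I expect the twisted very-short-root bookkeeping for $A_{2n}^{(2)}$ and the appearance of $q'=\bar q$ and $q'=-1$ in the $D_4^{(3)}$ row to be the most delicate points.
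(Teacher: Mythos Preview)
Your overall strategy is genuinely different from the paper's, and several of the load-bearing steps would not go through as stated.

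First, the definition of $u_q^\L(\g)^+$ via the span of PBW monomials supported on roots with $\ell_\beta\neq 1$ presupposes a convex PBW basis for $U_q^\L(\g)^+$. The paper explicitly avoids this: Beck's basis is available only for \emph{untwisted} $\g$, and the paper treats the existence of such a basis in the twisted cases as open (it only \emph{conjectures} the isomorphism $u_q^\L(\g)^+\cong\B(M)$ there). So your very first move already assumes more than is known. The paper instead \emph{constructs} $u_q^\L(\g)^+$ from the other side: it produces an explicit finite set of primitive elements $E_{\alpha_i^{(0)}}$ and lets them generate the subalgebra. The key structural input for this, which is absent from your plan, is the analysis of the subsystem $(\Delta^\vee)^t$ of long roots in the \emph{dual} root system (Section~\nref{sec_subsystem}): the short roots of $\g$ with $\ell_\alpha\neq 1$ are identified as duals of long roots in $\Delta^\vee$, and the new simple roots $\alpha_i^{(0)}$ are read off from the simple roots $\Pi^t$ of that subsystem. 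Primitivity of the corresponding $E_{\alpha_i^{(0)}}$ is then proved via a reflection lemma (Lemma~\nref{lm_reflectionPrimitive}): Lusztig's $T_i$ preserves primitivity whenever $q^{(\alpha_i,\alpha_i)}=1$, and the minimality of the chosen $\alpha_i^{(0)}$ guarantees they are reached from simple $E_{\alpha_k}$ by such reflections. Your sketch has no analogue of this mechanism.

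Second, your appeal to Heckenberger's classification of arithmetic root systems to identify $\B(M)$ does not apply directly: that classification is for \emph{finite-dimensional} diagonal Nichols algebras, whereas here $\B(M)$ is infinite-dimensional of affine type. The paper instead uses only Heckenberger's Cartan-matrix formula $a_{ij}=-\min\{m:(m+1)_{q_{ii}}(q_{ii}^mq_{ij}q_{ji}-1)=0\}$ to read off the Dynkin type of $M$, and then verifies by hand that the resulting root system exhausts the real roots with $\ell_\alpha\neq 1$ (checking in particular that $\delta_{\g^{(0)}}$ is a multiple of $\delta_\g$). Your description of the exotic cases is also inverted: in $G_2^{(1)},\ell=4$ the three simple root vectors span a braiding of type $A_3$, \emph{not} $A_3^{(1)}$, so they generate too \emph{small} an algebra; the extra primitive $E_{\alpha_1+2\alpha_2}$ must be exhibited by hand (it involves a divided power and is not a braided commutator of the $E_{\alpha_i}$), and only then does the braiding become $A_3^{(1)}$. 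The new primitives are not ``acquired'' by $\B(M)$ from a classification---they have to be found inside $U_q^\L(\g)^+$ one at a time, and this is the actual work of Section~\nref{sec_exotic}. Finally, the coradical-gradedness in the paper is proved not through the coradical filtration directly but by a short trick: one exhibits a linear functional $f$ on the root lattice with $f(\alpha_i^{(0)})=1$ for all $i$, so the $\N^\Pi$-grading refines to an $\N$-grading with the generators in degree~$1$.
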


We shall now discuss the approach and results of this article in more detail:\\

Lusztig defines in \cite{Lusz94} Chp. 36 the subalgebra $_Ru$ to be generated
by all $E_{\alpha_i},\;\ell_{\alpha_i}\neq 1$ and works under the additional
restriction 
$$\ell_{\alpha_j}\geq 2\;\Rightarrow \;
  \ell_{\alpha_i}\geq -a_{ij}+1\qquad (\ast)$$
(and $\g$ not containing odd cycles). Then he establishes a Frobenius
homomorphism with kernel $_Ru$. Note that in contrast in the finite case he had
in \cite{Lusz90b} Thm. 8.3 defined $u$ without restrictions on $q$ as being
generated by \emph{all} root vectors $E_\alpha$ with $\ell_\alpha\neq 1$, but
has refrained from doing so in the affine case by lack of root vectors (it
would be interesting to now use the results in \cite{Beck94}). Note that
frequently already in the finite case $u$ violating $\ell_{\alpha_i}\neq 1$ is
\emph{not} generated by simple root vectors.\\

The aim of this article is to nevertheless find a suitable subalgebra
$u_q^\L$  as in \cite{Lusz90b} and more
importantly calculate the structure of $u_q^\L$ in all cases violating either
$\ell_{\alpha_i}\neq 1$ or $(\ast)$. In the first case the type of $u_q^\L$ will
be determined by a subset of roots (mostly the short roots), which will be
characterized by a subsystem of the \emph{dual} root system. In the second
\emph{exotic cases} the root system severely changes (compare
the only finite example $G_2,\ell=4$).\\

In Section \nref{sec_subsystem} we determine the subsystems of the affine root
systems consisting of all roots divisible by a fixed integer. This extends
results in e.g. \cite{Car05} Prop. 8.13. in the case of a finite $\g$ and is
responsible for most of the root system data in the main theorem.\\

In Section \nref{sec_main} we formulate the main theorem and prove the general
statements. The final case-by-case analysis is performed in the remaining
sections \nref{sec_degenerate}-\nref{sec_A11}:\\

In Section \nref{sec_degenerate} we consider all \emph{degenerate cases} where
some simple roots fail $\ell_{\alpha_i}\neq 1$; this leaves only a subset of
generators $E_{\alpha_i^{(0)}}=E_{\alpha_i}$ for $u_q^\L(\g)$. We first give a
general approach to find more exceptionally primitive elements
$E_{\alpha_j^{(0)}}$ for small $q$. Namely, the Lusztig reflection operator is
still defined with respect to the root system of $\g$, whereas the ``true'' root
system generated by simple root vectors is now smaller. The images of such
``inappropriate'' reflections turn out to be new primitives. We then give a
generic argument that shows the dual subsystems found in Section
\nref{sec_subsystem} characterizes the quantum affine algebra generated by all
$E_{\alpha_i^{(0)}}$.\\

In Section \nref{sec_exotic} we turn to the \emph{exotic cases} where all
$\ell_{\alpha_i}\neq 1$, so all $E_{\alpha_i^{(0)}}=E_{\alpha_i}$, but the
second condition $(\ast)$ is violated. These cases are very interesting,
because the $E_{\alpha_i}$ only generate a (smaller) different quantum affine
algebra than expected, so one again has to add additional primitives and the
rank is now larger than $\g$. A similar phenomenon has been observer by the
author already in the case $\g=G_2,\ell=4$, where the corresponding
$u_q^\L(\g)^+$ is isomorphic to $u^\L_{\bar{q}}(A_3)^+$. Approach these cases
one-by-one and apply the theory of Nichols algebras to determine the subalgebra
generated by the $E_{\alpha_i}$. Then we find additional primitives (sometimes
using techniques from Section \nref{sec_degenerate}, some by guess-and-check),
until we finally account for all roots with $\ell_\alpha\neq 1$.\\

We finally turn to cases where Section \nref{sec_A11} returns one or
more copies of $A_1^{(1)},\ell=4$. One could accept this result, but it again
violates $(\ast)$, so we study it a bit further. Surprisingly this is the most
misbehaved case: The $E_{\alpha_0},E_{\alpha_1}$ span a braided vector space of
type $A_1\times A_1$. They generate a (quasi-) classical universal enveloping of
type $A_2$ (so there is so-called nontrivial liftings). The entire algebra
$u_q(A_1^{(1)})$ can be described as an infinite tower of extensions by other
$A_2$ algebras. We can only make such observations by using Drinfel'd
alternative generating system as determined by \cite{Beck94}, which views the
algebra as an explicit affinization (not a mere Cartan matrix).\\

We close in section \nref{sec_questions} by stating some open questions that
were out of the scope of this paper.

\section{Preliminaries}
\subsection{Affine Lie algebras}\label{subsec_affine}

Our exposition is largely from \cite{Kac84} Sec. 1. Affine Lie
algebras are characterized by the fact that the Cartan matrix is positive
semidefinite and there is a unique isotropic root $\delta\Z$ of length $0$,
which is clearly not in the Weyl group orbit of any simple roots. Proper
parabolic
subsystems always correspond to finite dimensional Lie algebras and there is a
common choice of such a parabolic subsystem $\{\alpha_1,\ldots
\alpha_{{\mathrm rank}(\g)-1}\}$ corresponding to a finite root system
$\bar{\Delta}$ which
is extended by an additional simple root $\alpha_0$. Affine Lie algebras are
classified: The so-called \emph{twisted affine Lie algebras}
$X_n^{(1)}=\hat{X}_n$ are obtained by extending a finite Lie algebra of type
$X_n$ by the negative highest root $\alpha_0$; they can be realized by centrally
extending the loop algebra $X_n\otimes \C[t,t^{-1}]$. The other so-called
\emph{twisted affine Lie algebras} can be realized similarly by an extension
involving an outer automorphism of $X_n$; this is what Kac's notation
$X_n^{(2)},D_4^{(3)}$ refers to, which is well established especially in
physics. Other authors such as Carter, Fuchs, etc. denote the twisted affine Lie
algebras in a way that emphasizes the similar Weyl group and (equivalently) how
a parabolic finite Lie algebra with root system $\bar{\Delta}$ is extended by a
different $\alpha_0$ then in the untwisted case. For example, both
$G_2^{(1)},D_4^{(3)}$ have affine Weyl groups $G_2$ and can be obtained from
extending $\bar{\Delta}=G_2$. The author would prefer the second notation, but
sticks with the more common one.\\ 

Another less direct construction the author finds convenient (and may not be
new) is to obtain affine Lie algebras from simply-laced untwisted Lie algebras
by the \emph{folding} procedure: Let $\g$ be a Lie algebra and $f$ an
automorphism of the Dynkin diagram, then we may consider the Lie subalgebra
$\g^f$ fixed by $f$. Note this is \emph{not} the same as the twisted
realizations using an automorphism of the finite root system!\\

Note also that as in the finite case one may form (in
non-simply-laced cases) the dual root system consisting of rescaled coroots
$\alpha_i^\vee=\frac{2}{(\alpha_i,\alpha_i)}\alpha_i$, which switches long and
short roots. We summarize all Dynkin diagrams and then all properties:
\enlargethispage{3cm}
\begin{center}
\begin{tabular}{m{2.5cm}m{14cm}}
$A_1^{(1)}$ & \imageplain{.2}{A11}\\
$A_{n}^{(1)},\;n\geq 2$ & \imageplain{.2}{An1}\\
$B_{n}^{(1)},\;n\geq 3$ & \imageplain{.2}{Bn1}\\
$C_{n}^{(1)},\;n\geq 2$ & \imageplain{.2}{Cn1}\\
$D_{n}^{(1)},\;n\geq 4$ & \imageplain{.2}{Dn1}\\
$E_6^{(1)}$ & \imageplain{.2}{E61}\\
$E_7^{(1)}$ & \imageplain{.2}{E71}\\
$E_8^{(1)}$ & \imageplain{.2}{E81}\\
$F_4^{(1)}$ & \imageplain{.2}{F41}\\
$G_2^{(1)}$ & \imageplain{.2}{G21}\\
\hline\vspace{.2cm}\\
$D_{n+1}^{(2)},\;n\geq 2$ & \imageplain{.2}{Dn+12}\\
$A_{2n-1}^{(2)},\;n\geq 3$ & \imageplain{.2}{A2n-12}\\
$E_6^{(2)}$ & \imageplain{.2}{E62}\\
$D_4^{(3)}$ & \imageplain{.2}{D43}\\
\hline\vspace{.2cm}\\
$A_2^{(2)}$ & \imageplain{.2}{A12}\\
$A_{2n}^{(2)},\;n\geq 2$ & \imageplain{.2}{A2n2}\\
\end{tabular}
\end{center} 

\renewcommand{\thefootnote}{\fnsymbol{footnote}}
\begin{center}
\begin{tabular}{lll|l|l}
$\g$ & rank & $\bar{\Delta}$ & Dual root system $\Delta^\vee\quad$ & Folding
via \\
 \hline
$A_1^{(1)}$ & $2$ & $A_1$ &  & \footnotemark[2]
$A_3^{(1)}\;/\;\Z_2\times\Z_2$ \\
$A_n^{(1)}$ & $n+1$ & $A_n$ & 
  & \footnotemark[2] $A_{k(n+1)-1}^{(1)}\;/\;\Z_k\quad \forall_k$ \\
$B_n^{(1)}$ & $n+1$ & $B_n$ & $A_{2n-1}^{(2)}$ & $D_{n+1}^{(1)}\;/\;\Z_2$ \\
$C_n^{(1)}$ & $n+1$ & $C_n$ & $D_{n+1}^{(2)}$ 
  & \footnotemark[2] $A_{2n-1}^{(1)}\;/\;\Z_2$ \\
$D_n^{(1)}$ & $n+1$ & $D_n$ & & \\
$E_6^{(1)}$ & $7$ & $E_6$ & &  \\
$E_7^{(1)}$ & $8$ & $E_7$ & &  \\
$E_8^{(1)}$ & $9$ & $E_8$ & &  \\
$F_4^{(1)}$ & $5$ & $F_4$ & $E_{6}^{(2)}$ & $E_6^{(1)}\;/\;\Z_2$ \\
$G_2^{(1)}$ & $3$ & $G_2$ & $D_{4}^{(3)}$ & $D_4^{(1)}\;/\;\S_3$ \\
\hline
$D_{n+1}^{(2)}$ & $n+1$ & $B_{n}$ & $C_{n}^{(1)}$ 
  & $D_{n+2}^{(1)}\;/\;\Z_2\times\Z_2$\\
$A_{2n-1}^{(2)}$ & $n+1$ & $C_{n}$ & $B_{n}^{(1)}$ 
  & $D_{2n}^{(1)}\;/\;\Z_2$ \\
$E_6^{(2)}$ & $5$ & $F_4$ & $F_4^{(1)}$  & $E_7^{(1)}\;/\;\Z_2$ \\
$D_4^{(3)}$ & $3$ & $G_2$ & $G_2^{(1)}$ & $E_6^{(1)}\;/\;\S_3$ \\
\hline
$A_2^{(2)}$ & $2$ & $A_1$ & selfdual & $D_4^{(1)}\;/\;\S_4$ \\
$A_{2n}^{(2)}$ & $n+1$ & $C_{n}$ & selfdual 
  & $D_{2n+2}^{(1)}\;/\;(\Z_2\times\Z_2)\rtimes \Z_2$ \\
\end{tabular}
\end{center} 
\footnotetext[2]{A very remarkable fact is that the simply-laced $A_{n}^{(1)}$
of rank $n+1$ can be folded from $A_{k(n+1)-1}^{(1)}$
of rank $k(n+1)$ for any $k$ via a rotation of order $k$. As a consequence,
there is also an infinite series of folding for $A_1^{(1)}$ as well as an
infinite series of folding $C_{n}^{(1)}$ of rank $n+1$ from $C_{2n}^{(1)}$ of
rank $2(n+1)-1$.}

Two important numbers associated to the affine Lie algebra is the
superscript number $k=(1),(2),(3)$ and the number $a_0=1$ for all cases except
$A_2^{(2)},A_{2n}^{(2)}$ have $a_0=2$. We will frequently distinguish the cases
$a_0k=1,2,3,4$. We denote $n=\rank(\bar{\Delta})=\rank(\Delta)-1$. We summarize
the description of the root system of the affine Lie algebras from \cite{Kac84}
Sec. 1.4:\\

The \emph{isotropic roots} (i.e. length $0$ and hence not in the Weyl orbit of
a simple root) are 
$$\Delta^{im}=\delta\Z\;\backslash\;\{0\},\qquad \delta:=a_0\alpha_0+\theta$$
where $\theta$ is the highest root of $\bar{\Delta}$ for $a_0k=1,4$ and the
highest short root for $a_0k=2,3$. Hence it is often convenient to draw
$\Delta$ in the root system $\Delta$ by projecting $\delta=0$ and hence drawing
$\alpha_0=-a_0^{-1}\theta$.\\
The \emph{multiplicity} of each root $m\delta$ is
$n=\rank(\bar{\Delta})$ with the following exceptions:
\begin{center}
  \begin{tabular}{l|llll}
  $\g$			& $A_{2n-1}^{(2)}\quad$ & $D_{n+1}^{(2)}\quad$ 
			& $E_6^{(2)}\quad$ & $D_4^{(3)}\quad$\\
  \hline
  $m$			& $2\nmid m$ & $2\nmid m$ & $2\nmid m$ & $3\nmid m$ \\
  mult$(m\delta)$	& $n-1$	     & $1$ & $2$ & $1$ \\
  \end{tabular}
\end{center}~\\
  
The \emph{real roots} (i.e. in the Weyl orbit) $\Delta^{re}$ are as follows
\begin{center}
  \begin{tabular}{ll}
  $a_0k=1$ & $\Delta^{re}=\bar{\Delta}+\delta\Z$ \\
  $a_0k=2,3\quad$ & $\Delta^{re}=\left(\bar{\Delta}^{short}+\delta\Z\right)
  \;\cup\; \left(\bar{\Delta}^{long}+\delta k\Z\right)$\\ 
  $a_0k=4\quad$ & $\Delta^{re}=\left(\bar{\Delta}^{short}+\delta\Z\right)
  \;\cup\; \left(\bar{\Delta}^{long}+\delta k\Z\right)
  \;\cup\; \left(\frac{1}{2}(\bar{\Delta}^{long}+\delta)+\delta \Z\right)$\\ 
  \end{tabular}
\end{center}
and all real roots have \emph{multiplicity} $1$.

\subsection{Affine quantum groups}

In \cite{Lusz94} Sec. 1.2 Lusztig defines a Hopf algebra $f$ over $\Q(q)$
associated to a Cartan datum. In modern terminology, consider the category
$\CC^{\Q(q)}$ of $\N^\Pi$-graded $\Q(q)$-vector spaces $V$ with braiding on
homogeneous elements $x_\alpha\otimes y_\beta \mapsto q^{(\alpha,\beta)}$. Let
$V\in\CC^{\Q(q)}$ be the vector space spanned by symbols $E_{\alpha_i}$.

Let $f'=TV$ be the tensor algebra which becomes a Hopf algebra in the braided
category $\CC^{\Q(q)}$ by defining $V$ to be primitive, i.e.
$\Delta(E_{\alpha_i})=1\otimes  E_{\alpha_i}+E_{\alpha_i}\otimes 1$. Then,
there is a unique symmetric Hopf pairing defined by 
$$(E_{\alpha_i},E_{\alpha_j})=\delta_{i,j}(1-q^{-(\alpha_i,\alpha_j)})^{-1}$$
Now let $I$ be the radical of the Hopf pairing, then define the Hopf algebra
$$f:=f'/\mathfrak{I}$$
In Sec. 36 an integral form, i.e. a Hopf algebra $_\mathfrak{A}f$ over the ring 
$\mathfrak{A}:=\Z[q,q^{-1}]$ is defined and then the specialization $_Rf$ 
restricted to a specific value $q\in\C^\times$ via $\otimes_\mathfrak{A} R$
where we may take $R=\C_q$ the field $\C$ with $q$ acting by the specified
value. {\bf Note this is only possible for good values}
$q^{(\alpha_i,\alpha_i)}\neq 1$ and leads to what we today call the Nichols
algebra $\B(V)$. These restrictions are always in place throughout the reminder
of \cite{Lusz94}. \\

On the other hand in Sec. 3 Lusztig proceeds, without any restrictions on $q$,
as in \cite{Lusz90a}\cite{Lusz90b} for finite Cartan datum. He takes the
Drinfel'd-Jimbo quantum group $U$ over $\Q(q)$, defines an integral form
(restricted form) $_\mathfrak{A}U$ over the ring $\mathfrak{A}:=\Z[q,q^{-1}]$
and performs again specialization $_RU$. We again take $R=\C_q$ and denote this
Hopf algebra over $\C$ by $U_q^\L(\g)$. Moreover, the Borel part
$U_q^{\L}(\g)^+$ is again a Hopf algebra in the Yetter-Drinfel'd modules over
the root lattice $\Lambda_R$ of $\g$. Compare the authors introduction in
\cite{Len14c}.\\

From this point the case of affine Lie algebras start to exhibit an incomplete
picture. Lusztig does not prove that reflection provides a PBW-basis, see
Lusztig's respective question in \cite{Lusz94} Sec. 40.2.3, although he gives a
sketch how such a fact might be proven, using an infinite sequence of Weyl
group elements of ascending length. He establishes a Frobenius homomorphism if
the root of unity fulfills the restrictions, namely $q^(\alpha_i,\alpha_i)\neq
1$ and the Lie algebra Dynkin diagram does not contain odd cycles. He shows that
under these restrictions on $q$ the kernel of the Frobenius homomorphism, which
we call today \emph{Frobenius-Lusztig kernel}, is generated by all
$E_{\alpha_i}$ with $q^{(\alpha_i,\alpha_i)}$.\\

Note that in the completely worked out case in \cite{Lusz90a}\cite{Lusz90b}
for finite root systems the Frobenius-Lusztig kernel is shown to be
generated by \emph{all} root vectors $E_\alpha,q^{(\alpha,\alpha)}\neq 1$. As
the author has worked out in \cite{Len14c}, this Hopf algebra is for arbitrary
$q$ associated to a different Lie algebra and in one exotic case $G_2,q=\pm i$
not even generated just by the simple root vectors $E_{\alpha_i}$. The author
also provides a different proof for the Frobenius homomorphism that includes
arbitrary $q$.\\

The aim of this article is to consider all the affine Lie algebras $\g$ and
the roots of unity $q$ that \emph{violate} the condition
$q^{(\alpha_i,\alpha_i)}\neq 1$ and realize the affine Frobenius-Lusztig kernel
(of usually different Lie type) inside $U_q^\L(\g)^+$ that corresponds precisely
to the set of all roots $q^{(\alpha,\alpha)}\neq 1$. 

\section{The subsystem of long roots}\label{sec_subsystem}

A natural question for any root system $\Delta$ is the subsystem of
roots divisible by a fixed number $t\in\N$, though for affine Lie algebras the
author has not found this question addressed in literature explicitly. The aim
of this section is to determine this subsystem. We note already at this point,
that the structure of the Frobenius-Lusztig kernel will often (but not always)
be described by the set of \emph{short} roots, hence the set of long roots in
the dual root system. In the finite case treated by the author in \cite{Len14c}
these were in fact all but a single exotic case $G_2,q=\pm i$.\\

\begin{lemma}\label{lm_DeltaT}
  Let $\Delta$ be a root system and $t\in \N$, then the subset
  $$\Delta^t:=\{\alpha\in\Delta\;|\;t|(\alpha,\alpha)\}$$
  is a root system. Note that $\Delta^2=\Delta$ and arbitrary $\Delta^t$ may be
  empty. By convention, isotropic roots are contained in any $\Delta^t$.
\end{lemma}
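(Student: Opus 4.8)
The plan is to verify the root-system axioms for $\Delta^t$ directly and uniformly, leaving the identification of its concrete type to the later case-by-case sections. The only axiom that genuinely needs an argument is stability under the reflections $s_\alpha$ attached to the real roots $\alpha\in\Delta^t$; the exclusion of $0$, closure under $\alpha\mapsto-\alpha$, and the crystallographic (integrality) condition will all be essentially immediate.

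The key observation I would use is that each reflection $s_\alpha(\beta)=\beta-\langle\beta,\alpha^\vee\rangle\alpha$, with $\alpha^\vee=\tfrac{2}{(\alpha,\alpha)}\alpha$, is an isometry of the bilinear form: expanding gives $(s_\alpha(\beta),s_\alpha(\beta))=(\beta,\beta)$ in one line. Consequently, if $\beta\in\Delta^t$ and $\alpha\in\Delta^t$ is real, then $s_\alpha(\beta)$ is again a root of $\Delta$ (since $\Delta$ is a root system) whose squared length equals $(\beta,\beta)$ and is therefore still divisible by $t$; hence $s_\alpha(\beta)\in\Delta^t$. The same length-preservation yields $-\alpha\in\Delta^t$ whenever $\alpha\in\Delta^t$.

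For the remaining axioms I would note that integrality is inherited verbatim, since $\langle\beta,\alpha^\vee\rangle\in\Z$ already holds for all $\alpha,\beta\in\Delta^t\subseteq\Delta$, and that the isotropic roots require no work at all: $(m\delta,m\delta)=0$ is divisible by every $t$, so $\delta\Z\setminus\{0\}\subseteq\Delta^t$, in accordance with the stated convention. These isotropic roots carry no reflection of their own (the coroot $\alpha^\vee$ is undefined when $(\alpha,\alpha)=0$) and are simply fixed setwise by the Weyl group of $\Delta^t$. Finally, $\Delta^t$ is a root system inside the subspace it spans, whose rank may be strictly smaller than that of $\Delta$ (and $\Delta^t$ may be empty).

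The hard part is conceptual rather than computational and lives in the affine setting. Because reflections exist only for real roots, I must be careful to test closure in the reflection axiom against the real roots of $\Delta^t$ alone, letting the imaginary roots ride along via the convention---which is exactly what the isometry computation legitimizes. I would also explicitly flag that the lemma only asserts that $\Delta^t$ is an (abstract, possibly infinite) reflection-closed crystallographic set; pinning it down as a specific finite or affine root system, as recorded in the main theorem's table, is genuinely case-dependent and is deferred to Sections \nref{sec_degenerate}--\nref{sec_A11}.
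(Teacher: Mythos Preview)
Your proof is correct but takes a different route from the paper. You verify the standard reflection axiom directly via the isometry property $(s_\alpha(\beta),s_\alpha(\beta))=(\beta,\beta)$, which immediately gives $s_\alpha(\Delta^t)\subseteq\Delta^t$ for real $\alpha\in\Delta^t$. The paper instead checks only \emph{additive closure}: if $\alpha,\beta\in\Delta^t$ and $\alpha+\beta\in\Delta$, then $\alpha+\beta\in\Delta^t$, via the single line
\[
(\alpha+\beta,\alpha+\beta)=(\alpha,\alpha)+(\beta,\beta)+\frac{2(\alpha,\beta)}{(\alpha,\alpha)}(\alpha,\alpha)\in t\Z,
\]
using that the Cartan entry $\tfrac{2(\alpha,\beta)}{(\alpha,\alpha)}$ is an integer. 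Your approach matches the root-system axioms literally and handles the real/isotropic distinction in the affine case more carefully (you correctly note that isotropic roots carry no reflection and are simply $W(\Delta^t)$-stable). The paper's additive-closure formulation is terser and is in fact the property invoked downstream: the later arguments in Lemma~\nref{lm_longFinite} and Theorem~\nref{thm_longAffine} build the subsystem by adding roots, so closure under sums is what is used in practice. The two properties are equivalent here because $\Delta^t$ is symmetric under $\alpha\mapsto-\alpha$ and root strings through $\beta$ in the $\alpha$-direction are unbroken in $\Delta$.
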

\begin{proof}
  We only have to prove that for $\alpha,\beta\in\Delta^t$ and
  $\alpha+\beta\in\Delta$ we have $\alpha+\beta\in\Delta^t$:
  $$(\alpha+\beta,\alpha+\beta)=(\alpha,\alpha)
  +(\beta,\beta)+\frac{2(\alpha,\beta)}{(\alpha,\alpha)}(\alpha,\alpha)
  \in t\Z$$
  since the Cartan matrix $\frac{2(\alpha,\beta)}{(\alpha,\alpha)}\in\Z$. 
\end{proof}

We consider the cases $\Delta^t=\Delta$ \emph{generic} and all cases with
$\Delta^t$ only isotropic roots \emph{trivial}. Hence for simply-laced root
systems there are only generic and trivial cases.\\

\subsection{Finite root systems}

For completeness and later use we first we give the table with all
nontrivial/nongeneric cases for finite root systems $\Delta$, which has been
implicitly already used in \cite{Len14c} and has can be found in \cite{Car05}
Prop. 8.13. , including the new set $\Pi^t$ of
positive simple roots $\alpha_i^{t}$ for $\Delta^t$:
\begin{lemma}\label{lm_longFinite}
  Let $\Delta$ be a connected finite root system, then all $\Delta^t$
  are equal to $\Delta$ or empty except the following
  cases:\\

  \begin{center}
  \begin{tabular}{ll|ll}
  $\Delta$ & $t$ & $\Delta^t$ & $\Pi^t$\\
  \hline
  $B_n$ & $4$ & $D_n$ & $\alpha_1,\alpha_2,\ldots,\alpha_{n-1},
    \alpha_{n-1}+2\alpha_n$\\
  $C_n$ & $4$ & $A_1^{\times n}$ & $\alpha_n,\alpha_n+2\alpha_{n-1},
    \alpha_n+2\alpha_{n-1}+2\alpha_{n-2},\dots$\\
  $F_4$ & $4$ & $D_4$ & $\alpha_1,\alpha_2,\alpha_2+2\alpha_3,
    \alpha_2+2\alpha_3+2\alpha_4$\\
  $G_2$ & $3,6$ & $A_2$ & $\alpha_1,\alpha_1+3\alpha_2$ 
  \end{tabular}
  \end{center}
  Note that our choice of $\Pi^t$ is minimal in the sense that any
  $\alpha_i^t\in\Pi^t$ contains precisely one simple root $\Delta^t\cap \Pi$ and
  only with multiplicity one. 
\end{lemma}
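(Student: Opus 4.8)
The plan is to verify, for each of the four exceptional finite root systems $\Delta$ listed in Lemma \ref{lm_longFinite}, that $\Delta^t$ has the claimed type and that the proposed set $\Pi^t$ is indeed a system of simple roots. By Lemma \ref{lm_DeltaT} we already know $\Delta^t$ is a root system, so the real content is (i) identifying which roots satisfy $t\mid(\alpha,\alpha)$, and (ii) exhibiting an explicit base. First I would normalize the bilinear form in each case so that short roots have squared length $2$ and long roots squared length $2/c$ after rescaling, or more conveniently fix the standard normalization from \cite{Kac84} where in $B_n, C_n, F_4$ the ratio is $2$ (so lengths $2$ and $4$, or $1$ and $2$ depending on convention) and in $G_2$ the ratio is $3$. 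With lengths pinned down, the divisibility condition $t\mid(\alpha,\alpha)$ selects exactly the long roots for $B_n, F_4, G_2$ (at the stated $t$) and exactly the short roots for $C_n$; this is a direct length computation that I would not grind through.

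Once the underlying set $\Delta^t$ is identified as ``the long roots of $\Delta$'', the type can be read off from the classical structure of the long-root subsystem: the long roots of $B_n$ form $D_n$, the short roots of $C_n$ (which are long in the dual) split into $n$ mutually orthogonal $A_1$'s, the long roots of $F_4$ form $D_4$, and the long roots of $G_2$ form $A_2$. I would justify each identification by counting roots and checking the rank, rather than by abstract classification: for instance in $B_n$ the long roots are $\pm e_i\pm e_j$, which is exactly the $D_n$ root system, and in $C_n$ the short roots $\pm e_i\pm e_j$ versus the long $\pm 2e_i$ flip, so the divisible-by-$t$ roots are the $\pm 2e_i$, manifestly $n$ orthogonal copies of $A_1$.

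The one genuinely checkable step is that the proposed $\Pi^t$ is a base for $\Delta^t$ with the claimed minimality property. For this I would verify that each listed element is actually a root lying in $\Delta^t$ (a length check), that they are linearly independent (immediate, since in each family successive elements introduce a new simple root of $\Pi$), and that every root of $\Delta^t$ is a $\Z_{\geq 0}$ or $\Z_{\leq 0}$ combination of them. The minimality claim---that each $\alpha_i^t$ contains exactly one simple root from $\Delta^t\cap\Pi$ and with multiplicity one---is then visible from the explicit expressions: in $\Pi^t$ for $B_n$, for example, $\alpha_1,\dots,\alpha_{n-1}$ already lie in $\Pi$, and the sole ``new'' generator $\alpha_{n-1}+2\alpha_n$ picks up the short simple root $\alpha_n$ only through the even coefficient $2$, so the unique genuinely-long simple root it contains is $\alpha_{n-1}$. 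The analogous bookkeeping runs for $C_n, F_4, G_2$.

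The main obstacle I expect is not any single computation but the verification that $\Pi^t$ generates \emph{all} of $\Delta^t$ and not a proper subsystem: one must confirm that the inductive pattern in the $C_n$ and $F_4$ entries (the ``$\dots$'' in $\Pi^t$ for $C_n$) is complete and that no long root of $\Delta$ falls outside the cone spanned by $\Pi^t$. The cleanest way to close this gap is a rank-and-cardinality argument---since $\Delta^t$ is a root system of the identified type with a known root count, and $\Pi^t$ is a linearly independent set of the right cardinality consisting of $\Delta^t$-roots whose reflections preserve $\Delta^t$, it must be a base---so I would lean on the classification of the subsystem type obtained in step two rather than reprove basehood from scratch. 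For the affine extension treated in Section \ref{sec_subsystem}, this finite table is then the seed from which the $\delta$-periodic affine subsystems are built.
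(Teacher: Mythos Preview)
Your approach is essentially the same as the paper's: proceed case-by-case, identify $\Delta^t$ as the set of long roots, exhibit the proposed $\Pi^t$ as simple roots for the subsystem, and confirm completeness by a root-counting argument. The paper verifies the new Cartan matrix by computing the inner products $(\alpha_i^t,\alpha_j^t)$ directly from the Cartan data of $\Delta$, whereas you invoke the standard $e_i$-coordinate realizations; both then close with the same cardinality comparison (e.g.\ $B_n$ has $2n(n-1)$ long roots, which equals $|D_n|$). One slip to correct: you write that for $C_n$ the condition $t\mid(\alpha,\alpha)$ selects ``exactly the short roots'', but as your own subsequent sentence correctly states, it is the \emph{long} roots $\pm 2e_i$ that have squared length $4$ and form $A_1^{\times n}$.
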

\begin{proof}
  Since all root lengths are $2,4$ resp. $2,6$ we only have to consider the
  cases in the statement. We proceed case-by-case and first show that the
  simple roots $\alpha_i^t$ indeed have the claimed new Cartan matrix; then we
  show by  counting that we have indeed found all new roots.
  \begin{itemize}
   \item Let $\Delta=B_n$ and $t=4$. The simple roots
      $\alpha_i^t:=\alpha_i,\;1\leq i\leq n-1$ are
      indeed long and have a Cartan matrix of type $A_{n-1}$.
      For the root $\alpha_n^t:=\alpha_{n-1}+2\alpha_n$ we check
      \begin{align*}
	(\alpha_n^t,\alpha_n^t)
	&=(\alpha_{n-1},\alpha_{n-1})+4(\alpha_n,\alpha_{n-1})
	+4(\alpha_n,\alpha_n)=4-8+8=4\\
	(\alpha_n^t,\alpha_{n-1}^t)
	&=(\alpha_{n-1},\alpha_{n-1})+2(\alpha_n,\alpha_{n-1})
	=4-4=0\\
	(\alpha_n^t,\alpha_{n-2}^t)
	&=(\alpha_{n-1},\alpha_{n-2})+2(\alpha_n,\alpha_{n-2})=-2\\
	(\alpha_n^t,\alpha_{i<n-2}^t)&=0
      \end{align*}
      This is the Cartan matrix of $D_n$ with center node $\alpha_{n-2}$ as
      claimed. It is known that $B_n$ has $2n^2$ roots and $2n(n-1)$ long roots.
      Since $D_n$ has also $2n(n-1)$ roots we see that $\Delta^t=D_n$.
    \item Let $\Delta=C_n$ and $t=4$. We check that all roots
      $\alpha_i^t=\alpha_n+2\alpha_n\cdots+2\alpha_{n-i+1}$ by induction:
      \begin{align*}
	(\alpha_1^t,\alpha_1^t)
	&=(\alpha_n,\alpha_n)=4\\
	(\alpha_{i+1}^t,\alpha_{i+1}^t)
	&=(\alpha_{i}^t,\alpha_{i}^t)
	+4(\alpha_{n-i},\alpha_{i}^t)+4(\alpha_i,\alpha_i)\\
	&=(\alpha_{i}^t,\alpha_{i}^t)
	+4(\alpha_{n-i},\alpha_n+\cdots+2\alpha_{n-i+1})+4(\alpha_i,\alpha_i)\\
	&=4-8+8=4
      \end{align*}
      Next we convince ourselves that all $\alpha_i^t$ have a $A_1^{\times n}$
      Cartan matrix i.e. are orthogonal (this does not mean
      $\alpha_i^t+\alpha_j^t\not\in\Delta$), let $i>j$: 
      \begin{align*}
	(\alpha_i^t,\alpha_j^t)
	&=(\alpha_j^t,\alpha_j^t)+(2\alpha_{n-j}+\cdots
	+2\alpha{n-i+1},\alpha_j^t)
	=4-4=0
      \end{align*}
      Hence the subsystem generated by the $\alpha_i^t$ is indeed of type
      $A_1^{\times n}$. It is known that $C_n$ has $2n^2$ roots and $2n$ long
      roots. Since $A_1^{\times n}$ has also $2n$ roots we have
      $\Delta^t=A_1^{\times n}$.
      \item Let $\Delta=F_4$ and $t=4$. We again calculate the Cartan matrix of
      $\alpha_i^t:=\alpha_1,\alpha_2,\alpha_2+2\alpha_3,
      \alpha_2+2\alpha_3+2\alpha_4$ as follows:
      \begin{align*}
	(\alpha_1^t,\alpha_1^t)
	&=(\alpha_1,\alpha_1)=4\\
	(\alpha_1^t,\alpha_2^t)
	&=(\alpha_1,\alpha_2)=-2\\
	(\alpha_1^t,\alpha_3^t)
	&=(\alpha_1,\alpha_2+2\alpha_3)=-2\\
	(\alpha_1^t,\alpha_2^t)
	&=(\alpha_1,\alpha_2+2\alpha_3+2\alpha_4)=-2\\
	(\alpha_2^t,\alpha_2^t)
	&=(\alpha_2,\alpha_2)=4\\
	(\alpha_2^t,\alpha_3^t)
	&=(\alpha_2,\alpha_2+2\alpha_3)=4-4=0\\
	(\alpha_2^t,\alpha_4^t)
	&=(\alpha_2,\alpha_2+2\alpha_3+2\alpha_4)=0\\
	(\alpha_3^t,\alpha_3^t)
	&=(\alpha_2+2\alpha_3,\alpha_2+2\alpha_3)\\
	&=(\alpha_2,\alpha_2)+4(\alpha_2,\alpha_3)+4(\alpha_3,\alpha_3)
	=4-8+8=4\\
	(\alpha_3^t,\alpha_4^t)
	&=(\alpha_3^t,\alpha_3^t)+(\alpha_2+2\alpha_3,2\alpha_4)
	=4-4=0\\
	(\alpha_4^t,\alpha_4^t)
	&=(\alpha_3^t,\alpha_3^t)+2(\alpha_2+2\alpha_3,2\alpha_4)+4(\alpha_4,
	\alpha_4)=4-8+8=4
      \end{align*}
      Hence the set of roots $\alpha_i^t$ generate a (rescaled) subsystem of
      type $D_4$ with center node $\alpha_1^t$ as claimed. It is known
      that $F_4$ has $48$ roots and $24$ long roots. Since $D_4$ has also
      $2n(n-1)=24$ roots we have $\Delta^t=D_4$.
      \item Let $\Delta=G_2$ and $t=3,6$. We again calculate the Cartan matrix
      of the short roots $\alpha_i^t=\alpha_1,\alpha_1+3\alpha_2$ as follows:
      \begin{align*}
	(\alpha_1^t,\alpha_1^t)
	&=(\alpha_1,\alpha_1)=6\\
	(\alpha_1^t,\alpha_2^t)
	&=(\alpha_1,\alpha_1)+3(\alpha_1,\alpha_2)
	=6-9=-3\\
	(\alpha_2^t,\alpha_2^t)
	&=(\alpha_1,\alpha_1)+6(\alpha_1,\alpha_2)+9(\alpha_2,\alpha_2)
	=6-18+18=6
      \end{align*}
      Hence the set of roots $\alpha_i^t$ generate a (rescaled) subsystem of
      type $A_2$ as claimed. It is known that $G_2$ has $6$ roots and $3$ long
      roots. Since $A_2$ has also $3$ roots we have $\Delta^t=A_2$.
  \end{itemize}
\end{proof}

\newpage
\subsection{Affine root systems}

We now determine the nontrivial/nongeneric cases for the affine Lie algebras
without considering the isotropic roots. 

\begin{theorem}\label{thm_longAffine}
    Let $\Delta$ be a connected affine root system, then all
    $\Delta^t$ are equal to $\Delta$ or only consist of isotropic
    roots except the following cases:\\
  
\begin{center}
  \begin{tabular}{ll|ll}
  $\Delta$ & $t$ & $\Delta^t$ & $\Pi^t$\\
  \hline
  $B_n^{(1)}$ & $4$ & $D_n^{(1)}$ 
  & $\alpha_0,\alpha_1,\alpha_2,\cdots,\alpha_{n-1},
    \alpha_{n-1}+2\alpha_n$\\
  $C_n^{(1)}$ & $4$ & $\left(A_1^{(1)}\right)^{\times n}$
  & $\alpha_n,\alpha_n+2\alpha_{n-1},
    \alpha_n+2\alpha_{n-1}+2\alpha_{n-2},\cdots$\\
  &&& $\alpha_0,\alpha_0+2\alpha_{1},
    \alpha_0+2\alpha_{1}+2\alpha_{2},\ldots$\\
  $F_4^{(1)}$ & $4$ & $D_4^{(1)}$ 
    & $\alpha_0,\alpha_1,\alpha_2,\alpha_2+2\alpha_3,
    \alpha_2+2\alpha_3+2\alpha_4$\\
  $G_2^{(1)}$ & $3,6$ & $A_2^{(1)}$ & $\alpha_0,\alpha_1,\alpha_1+3\alpha_2$ \\
  \hline
  $D_{n+1}^{(2)}$ & $4$ & $D_n^{(1)}$ &
    $\alpha_0',\alpha_1,\alpha_2,\cdots,\alpha_{n-1},\alpha_{n-1}+2\alpha_n$\\
  &&&$\quad\alpha_0':=2\alpha_0+\alpha_1$\\
  $A_{2n-1}^{(2)}$ & $4$ & $\left(A_1^{(1)}\right)^{\times n}$
  & $\alpha_n,\alpha_n+2\alpha_{n-1},
    \alpha_n+2\alpha_{n-1}+2\alpha_{n-2},\ldots$\\
  &&& $\alpha_0',\alpha_0'+2\alpha_{1},
    \alpha_0'+2\alpha_{1}+2\alpha_{2},\ldots$\\
  &&&$\quad\alpha_0':=2\alpha_0+2\alpha_2+2\alpha_3+\cdots+2\alpha_{n-1}
    +\alpha_n$\\
  $E_6^{(2)}$ & $4$ & $D_4^{(1)}$ 
  & $\alpha_0',\alpha_4,\alpha_3,\alpha_3+2\alpha_2,
    \alpha_3+2\alpha_2+2\alpha_1$\\
  &&&$\quad\alpha_0':=2\alpha_0+2\alpha_1+2\alpha_2+\alpha_3$\\
  $D_4^{(3)}$ & $3,6$ & $A_2^{(1)}$ & $\alpha_0',\alpha_2,\alpha_2+3\alpha_1$\\
  &&&$\quad\alpha_0':=3\alpha_0+3\alpha_1+\alpha_2$\\
  \hline
  $A_2^{(2)}$ & $4,8$ & $A_1^{(1)}$ & $4\alpha_0+\alpha_1,\alpha_1$\\
  $A_{2n}^{(2)}$ & $4$ & $A_{2n-1}^{(2)}$ 
    & $\alpha_0',\alpha_1,\ldots,\alpha_n$\\
  &&&$\quad \alpha_0':=2\alpha_0+\alpha_1$\\
  $A_{2n}^{(2)}$ & $8$ & $\left(A_1^{(1)}\right)^{\times n}$ 
  & $\alpha_n,\alpha_n+2\alpha_{n-1},
    \alpha_n+2\alpha_{n-1}+2\alpha_{n-2},\ldots$\\
  &&& $\alpha_0'',\alpha_0''+2\alpha_{1},
    \alpha_0''+2\alpha_{1}+2\alpha_{2},\ldots$\\
  &&&$\quad\alpha_0'':=2\alpha_0'+2\alpha_2+2\alpha_3+\cdots+2\alpha_{n-1}
    +\alpha_n$\\  
  &&&$\quad \alpha_0':=2\alpha_0+\alpha_1$\\
  \end{tabular}
  \end{center}
  Note that our choice of $\Pi^t$ is minimal in the sense that any
  $\alpha_i^t\in\Pi^t$ contains precisely one simple root $\Delta^t\cap \Pi$ and
  only with multiplicity one.
\end{theorem}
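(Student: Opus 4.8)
The plan is to follow the case-by-case strategy of Lemma \ref{lm_longFinite}, but to first perform a reduction that disposes of all the $\delta$-shifts simultaneously. The key observation is that the length of a real root is insensitive to its $\delta$-shift: writing $\alpha=\bar{\alpha}+m\delta$ with $\bar{\alpha}\in\bar{\Delta}$ and using that $\delta$ is isotropic and orthogonal to every finite root, one gets $(\alpha,\alpha)=(\bar{\alpha},\bar{\alpha})$. In the cases $a_0k=4$ there is in addition the very short family $\tfrac12(\bar{\beta}+\delta)+\delta\Z$ with $\bar{\beta}$ long, whose length is $\tfrac14(\bar{\beta},\bar{\beta})$; by the normalization forced by $\Delta^2=\Delta$ in Lemma \ref{lm_DeltaT} (the shortest roots have length $2$) these are length $2$, the short roots length $4$, and the long roots length $8$. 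In every case $(\alpha,\alpha)$ is therefore constant on each length-class, so the condition $t\mid(\alpha,\alpha)$ either keeps or discards a whole class. Hence $\Delta^t\cap\Delta^{re}$ is a union of entire length-classes, each carrying the $\delta$-periodicity assigned to it in Section \ref{subsec_affine} (period $\delta$ for short and very short roots, period $k\delta$ for long roots).

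This reduces the theorem to (i) reading off, from the finite data, which classes survive, and (ii) identifying the affine type of the resulting periodic set. For (i) I would invoke Lemma \ref{lm_longFinite}: the finite projections of the surviving real roots are exactly $\bar{\Delta}^t$, so in the untwisted cases $a_0k=1$ one simply affinizes $\bar{\Delta}^t$ with full period $\delta$, while in the twisted cases one keeps, with their inherited periods, precisely the classes whose common length is divisible by $t$. For (ii) I would verify the proposed simple system $\Pi^t$ exactly as in the finite proof: compute all pairwise inner products of the listed $\alpha_i^t$, including the new affine node $\alpha_0'$ or $\alpha_0''$, and read off the claimed affine Cartan matrix. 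The genuinely new check, absent in the finite setting, concerns this affine node: one must confirm that the isotropic vector $\delta'$ attached to $\Pi^t$ is the correct integer multiple of $\delta$ (typically $\delta$, $2\delta$ or $k\delta$), since this multiple is exactly what decides whether the subsystem is untwisted $X^{(1)}$ or twisted.

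For completeness I would replace the finite root-count by a saturation argument. Each surviving length-class is, by the reduction, a finite Weyl orbit carried along with a fixed period; the simple roots $\Pi^t$, together with the affine Weyl group they generate, produce precisely these families and nothing beyond them. Since any root of $\Delta^t$ must lie in one of the surviving classes and every element of such a class is already accounted for, $\Pi^t$ indeed generates all of $\Delta^t$. The minimality assertion about $\Pi^t$ (each $\alpha_i^t$ meeting $\Delta^t\cap\Pi$ in a single simple root, and with multiplicity one) is then read off directly from the displayed expressions.

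I expect the main obstacle to be the cases $a_0k=4$, namely $A_2^{(2)}$ and $A_{2n}^{(2)}$, where three length-classes coexist, the half-integer family must be tracked, and $a_0=2$ complicates both $\delta=2\alpha_0+\theta$ and the affine node. These are precisely the cases with the subtlest behaviour: for $A_{2n}^{(2)}$ with $t=4$ one discards only the very short roots and the result stays twisted, $A_{2n-1}^{(2)}$, whereas $t=8$ discards everything but the long roots and collapses to $(A_1^{(1)})^{\times n}$; and for $A_2^{(2)}$ the absence of a short class (since $C_1=A_1$) forces $t=4$ and $t=8$ to return the same answer $A_1^{(1)}$. Pinning down the affine node $\alpha_0'=2\alpha_0+\alpha_1$ (respectively $\alpha_0''$) together with the period of each surviving class, so that both the Cartan matrix and the twist-type come out as claimed, is where the computation will demand the most care.
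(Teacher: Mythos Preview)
Your proposal is correct and follows essentially the same strategy as the paper: use that $(\alpha,\alpha)=(\bar\alpha,\bar\alpha)$ to reduce to length-classes, invoke Lemma~\ref{lm_longFinite} for the finite projection, then verify the Cartan matrix of the listed $\Pi^t$ together with the correct $\delta$-correspondence, and finally check that all surviving roots are accounted for.

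The one organizational difference worth noting is the treatment of the twisted cases $a_0k=2,3$. You would verify the Cartan matrix of $\Pi^t$ directly in each case. The paper instead takes a detour: it exhibits a single long root $\alpha_0'$ such that $\{\alpha_0'\}\cup\bar\Pi$ generates precisely the \emph{untwisted} affine system $\Delta'$ with the same $\bar\Delta$, and checks $\delta_{\Delta'}=k\delta_\Delta$. Since $\Delta^{t,re}=\bar\Delta^{long}+k\delta\Z=(\Delta')^{t,re}$, the twisted computation is thereby reduced wholesale to the untwisted one already done. This buys a conceptual explanation for the Remark following the theorem (why twisted and untwisted with the same $\bar\Delta$ yield the same $\Delta^t$), at the cost of one extra embedding calculation per case; your direct route is more uniform but treats each twisted case from scratch. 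Either way the disconnected cases $(A_1^{(1)})^{\times n}$ require the extra care you anticipate, and the paper, like you, singles out $a_0k=4$ for separate explicit treatment.
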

\begin{remark}
  With the exception of $A_{2n}^{(2)}$ the subsystem $\Delta^t$ turns out to
  be the affinization of the respective subsystem of the finite root system
  $\bar{\Delta}^t$ (this is more serious for the disconnected cases in the
  second and last row, where $\Delta^t$ has higher rank $2n$). Also it coincides
  whenever $\bar{\Delta}$ coincides. It would be nice to find a more systematic
  reason for this. Our proof uses a nice inclusion between the respective root
  systems, which may be known.
\end{remark}
\begin{proof}
  All root lengths are $2,4$ resp. $2,6$ except $2,8$ for 
  $A_2^{(2)}$ and $2,4,8$ for $A_{2n}^{(2)}$, hence we only have to consider the
  cases in the statement. We proceed similarly as (and using heavily) Lemma
  \nref{lm_longFinite}: Namely, we ``guess'' a set of positive simple roots
  $\Pi^t\subset \Delta^t$, calculate the root system generated by $\Pi^t$ and
  compare to the set $\Delta^t$ to show we indeed have found all.
  The set $\Delta^t$ can be easily read off the explicit form of
  $\Delta$ in Subsection \nref{subsec_affine}, so we distinguish the three cases
  for $a_0k$. Note from the Dynkin diagrams, that roots in $\bar{\Delta}$ have
  the same length as in $\Delta$ (no rescaling) except for $A_2^{(2)},
  A_{2n}^{(2)}$, which is the case $a_k=4$.\\

  Let $a_0k=1$, i.e. $\Delta$ an untwisted affine Lie algebra. Then
  $\Delta^{re}=\bar{\Delta}+\delta\Z$ and we only have to check $t=4$.
  Obviously $\alpha+\delta\Z$  with $\alpha\in \bar{\Delta}$ is long
  (i.e. length $4$ resp $6$) iff  $\alpha$ is. We hence finds
  $$\Delta^{t,re}=\bar{\Delta}^{long}+\delta\Z$$
  We want to prove that $\Delta^{t}$ is indeed the root
  system of the
  untwisted affine Lie algebra associated to $\bar{\Delta}^{t}$ (note
  $D_3^{(1)}:=A_3^{(1)}$); for
  the latter we have determined the root system and a set of simple roots
  $\alpha_i^t,\;1\leq i\leq n$ in Lemma \nref{lm_longFinite}. We further notice
  from the Dynkin diagram of the untwisted affine Lie algebras that
  $\alpha_0^t:=\alpha_0$ is always a long root. We start with the cases where
  $\bar{\Delta}^t$ is connected, in these cases $\Pi^t:=\{\alpha_0^t\}\cup
  \bar{\Pi}^t$ will already be a set of simple roots for the affinization:
  \begin{itemize}
  \item Let $\Delta=B_n^{(1)},\;n\geq 3$ and $t=4$, then $\bar{\Delta}^t=D_n$
    (note $D_3=A_3$) with simple roots $\alpha_i^t:=\alpha_i,\;1\leq i\leq n-1$
    and
    $\alpha_n^t:=\alpha_{n-1}+2\alpha_n$. We convince ourselves that
    $\Pi^t:=\{\alpha_0^t\}\cup \bar{\Pi}^t$ with $\alpha_0^t:=\alpha_0$ is of
    type $D_n^{(1)}$ resp. $A_3^{(1)}$ (rescaled) and
    $\delta_{B_n^{(1)}}=\delta_{D_n^{(1)}}$
    under this correspondence:
    \begin{align*}
      (\alpha_0^t,\alpha_1^t)
      &=(\alpha_0,\alpha_1)=0\\
      (\alpha_0^t,\alpha_2^t)
      &=(\alpha_0,\alpha_2)=-2\\
      (\alpha_0^t,\alpha_3^t)
      &=\begin{cases}
	(\alpha_0,\alpha_2+2\alpha_3)=-2,&n=3\\
	(\alpha_0,\alpha_3)=0,& n\geq 4
        \end{cases}\\
      (\alpha_0^t,\alpha_{i}^t)
      &=0,\qquad i\geq 4\\
      \delta_{D_n^{(1)}}
      &=\alpha_0^t+\theta_{D_n}\\
      &=\alpha_0^t+\alpha_1^t+2\alpha_2^t+2\alpha_3^t+\cdots
      +\alpha_{n-1}^t+\alpha_{n}\\
      &=\alpha_0+\alpha_1+2\alpha_2+2\alpha_3+\cdots
      +\alpha_{n-1}+(\alpha_{n-1}+\alpha_n)\\
      &=\alpha_0+\theta_{B_n}=\delta_{B_n^{(1)}}
    \end{align*}
    \item Let $\Delta=F_4^{(1)}$ and $t=4$, then $\bar{\Delta}^t=D_4$ with
    simple roots $\alpha_i^t:=\alpha_1,\alpha_2,\alpha_2+2\alpha_3,
    \alpha_3+2\alpha_3+2\alpha_4$ and $\alpha_1^t$ the center node. We convince
    ourselves that
    $\Pi^t:=\{\alpha_0^t\}\cup \bar{\Pi}^t$ with $\alpha_0^t:=\alpha_0$ is of
    type $D_4^{(1)}$ (rescaled) and $\delta_{F_4^{(1)}}=\delta_{D_4^{(1)}}$
    under this correspondence:
    \begin{align*}
	(\alpha_0^t,\alpha_1^t)
	&=(\alpha_0,\alpha_1)=-2\\
	(\alpha_0^t,\alpha_2^t)
	&=(\alpha_0,\alpha_2)=0\\
	(\alpha_0^t,\alpha_3^t)
	&=(\alpha_0,\alpha_2+2\alpha_3)=0\\
	(\alpha_0^t,\alpha_4^t)
	&=(\alpha_0,\alpha_2+2\alpha_3+2\alpha_4)=0\\
	\delta_{D_4^{(1)}}
	&=\alpha_0^t+\theta_{D_4}\\
	&=\alpha_0^t+2\alpha_1^t+\alpha_2^t+\alpha_3^t+\alpha_4^t\\
	&=\alpha_0+2\alpha_1+\alpha_2+(\alpha_2+2\alpha_3)
	+(\alpha_2+2\alpha_3+2\alpha_4)\\
	&=\alpha_0+2\alpha_2+3\alpha_2+4\alpha_3+2\alpha_4\\
	&=\alpha_0+\theta_{F_4}=\delta_{F_4^{(1)}}
    \end{align*}
    \item Let $\Delta=G_2^{(1)}$ and $t=3,6$, then $\bar{\Delta}^t=A_2$ with
    simple roots $\alpha_i^t:=\alpha_1,\alpha_1+3\alpha_2$. We convince
    ourselves that
    $\Pi^t:=\{\alpha_0^t\}\cup \bar{\Pi}^t$ with $\alpha_0^t:=\alpha_0$ is of
    type $A_2^{(1)}$ (rescaled) and $\delta_{G_2^{(1)}}=\delta_{A_2^{(1)}}$
    under this correspondence:
    \begin{align*}
	(\alpha_0^t,\alpha_1^t)
	&=(\alpha_0,\alpha_1)=-3\\
	(\alpha_0^t,\alpha_2^t)
	&=(\alpha_0,\alpha_1+3\alpha_2)=-3\\
	\delta_{A_2^{(1)}}
	&=\alpha_0^t+\theta_{A_2}\\
	&=\alpha_0^t+\alpha_1^t+\alpha_2^t\\
	&=\alpha_0+\alpha_1+(\alpha_1+3\alpha_2)\\
	&=\alpha_0+\theta_{G_2}=\delta_{G_2^{(1)}}
    \end{align*}
    \end{itemize}
    We now turn to the case $C_n^{(1)},\;n\geq 2$ and $t=4$, where
    $\bar{\Delta}^t=A_1^{\times n}$ is disconnected. Here we wish to prove
    $\Delta^t=(A_1^{(1)})^{\times n}$. This is slightly more complicated than
    the previous cases, because we want multiple affinizations resp. the set of
    simple roots $\Pi^t:=\{\alpha_0\}\cup \bar{\Pi}^t$ will not suffice to
    generate $\Delta^t$. We proceed ad-hoc: Consider the diagram
    automorphism $f$ on $C_n^{(1)}$ switching $\alpha_i\leftrightarrow
    \alpha_{n-i}$. Define $\Pi^t:=\bar{\Pi}^t \cup f(\bar{\Pi}^t)$
    (which are long roots). We calculate the Cartan matrix of $\Pi^t$ and
    verify it is indeed $(A_1^{(1)})^{\times n}$ with each $\alpha_i^t$ a
    simple root and $\alpha_{0,i}^t:=f(\alpha_{n-i+1}^t)$ the respective
    affinization node:
    \begin{align*}
	(\alpha_i^t,\alpha_{0,j}^t)
	&=(\alpha_i^t,f(\alpha_{n-j+1}^t))
	=(\alpha_n+2\alpha_{n-1}+\cdots+2\alpha_{n-i+1},
	\alpha_0+2\alpha_{1}+\cdots+2\alpha_{n-j})=
    \end{align*}
    We distinguish three cases:
    \begin{itemize}
      \item If $i<j$ then clearly $(\alpha_i^t,\alpha_{0,j}^t)=0$
      \item If $i>j$ we have $0<n-i+1\leq n-j<n$ and consider the nonempty
      subsum $2\beta:=2\alpha_{n-i+1}+\cdots+2\alpha_j$. Since $\beta$ is in
      the in the parabolic $A_{i-j}$ subsystem of $C_n^{(1)}$ it is a short
      root. Hence:
      \begin{align*}
	(\alpha_i^t,\alpha_{0,j}^t)
	&=(\alpha_n+2\alpha_{n-1}+\cdots+2\alpha_{n-j+1}+2\beta,
	\alpha_0+2\alpha_1+\cdots+2\alpha_{n-i}+2\beta)\\
	&=2(\alpha_n+2\alpha_{n-1}+\cdots+2\alpha_{n-j+1},\beta)
	+2(\beta,\alpha_0+2\alpha_1+\cdots+2\alpha_{n-i})
	+4(\beta,\beta)\\
	&=-4-4+8=0
      \end{align*}
      \item If $i=j$ we calculate:
      \begin{align*}
	(\alpha_i^t,\alpha_{0,i}^t)
	&=(\alpha_n+2\alpha_{n-1}+\cdots+2\alpha_{n-i+1},
	\alpha_0+2\alpha_{1}+\cdots+2\alpha_{n-i})\\
	&=\begin{cases}
		(\alpha_n,2\alpha_{n-1})=-4,& i=1\\
		(2\alpha_{n-i+1},2\alpha_{n-1})=-4, & 1<i<n\\
		(2\alpha_1,\alpha_0)=-4, &i=0
	  \end{cases}
      \end{align*}
      Altogether we have shown that $\Pi^t$ defined above generates a
      (rescaled) root system of type $(A_1^{(1)})^{\times n}$. We now check
      explicitly that this already accounts for all roots in $\Delta^{t,re}=\pm
      \alpha_i^t+\delta\Z$: This is again, because in \emph{every}
      copy of $A_1^{(1)}$ we have the same $\delta$ under the
      correspondence:
      \begin{align*}
	\delta_{(A_1^{(1)})_i}
	&=\alpha_{0,i}^t+\theta_{(A_1)_i}
	=\alpha_{0,i}^t+\alpha_i^t\\
	&=\left(\alpha_n+2\alpha_{n-1}+\cdots+2\alpha_{n-i+1}\right)
	+\left(\alpha_0+2\alpha_{1}+\cdots+2\alpha_{n-i}\right)\\
	&=\alpha_0+2\alpha_1+2\alpha_2+\cdots+2\alpha_{n-1}+\alpha_n\\
	&=\alpha_0+\theta_{C_n}=\delta_{C_n^{(1)}}
      \end{align*}
    \end{itemize}
    
    Let $a_0k=2,3$, i.e.
    $\Delta=D_{n+1}^{(2)},E_6^{(2)},A_{2n-1}^{(2)},D_4^{(3)}$. Note
    that we still have $a_0=1$, but now $\theta$ is the highest short root,
    $\alpha_0$ is always a short root (hence not in $\Delta^t$) and
    the set of roots is 
    $$\Delta^{re}=\left(\bar{\Delta}^{short}+\delta\Z\right)
    \;\cup\; \left(\bar{\Delta}^{long}+\delta k\Z\right)$$
    We wish to prove uniformly that for these twisted affine Lie
    algebras $\Delta^t$ is the same as for the respective untwisted affine
    Lie algebra $\Delta'$ with same $\bar{\Delta}$. We will do so by choosing a
    long root $\alpha_0'$ such that the subsystem generated by
    $\{\alpha_0'\}\cup \Pi$ is precisely $\Delta'$ and
    $k\delta_\Delta=\delta_{\Delta'}$ under this correspondence. This then
    reduces the problem completely to $\Delta'$, since the long roots of
    $\Delta$ are precisely $\bar{\Delta}^{long}+\delta k\Z$:
    \begin{itemize}
     \item Let $\Delta=D_{n+1}^{(2)}$ and $t=4$, then we choose
     $\alpha_0':=2\alpha_0+\alpha_1$. It is clear (from the parabolic
    subsystem $B_n$ excluding $\alpha_n$) that $\alpha_0'$ is
    long and that $\{\alpha_0'\}\cup\Pi$ is of type $\Delta'=B_n^{(1)}$ and we
    check 
    \begin{align*}
	\delta_{B_n^{(1)}}
	&=\alpha_0'+\theta_{B_n}\\
	&=(2\alpha_0+\alpha_1)+(\alpha_1+2\alpha_2+\cdots 2\alpha_n)\\
	&=2\left(\alpha_0+(\alpha_1+\alpha_2+\cdots+\alpha_n)\right)\\
	&=2\left(\alpha_0+\theta^{short}_{B_n}\right)
	=2\delta_{D_{n+1}^{(2)}}
    \end{align*}
    \item Let $\Delta=A_{2n-1}^{(2)},n\geq 3$ and $t=4$, then we choose
    $\alpha_0':=2\alpha_0+2\alpha_2+2\alpha_3+\cdots+2\alpha_{n-1}+\alpha_n$
    (the highest root in the parabolic subsystem $C_n$ excluding $\alpha_1$),
    then we have to check $\{\alpha_0'\}\cup\Pi$ is indeed
    $\Delta'=C_n^{(1)}$ and $\delta_{C_n^{(1)}}=2\delta_{A_{2n-1}^{(2)}}$: It
    is clear from the subsystem that $\alpha'_0$ is orthogonal on all but
    $\alpha_1$ and we calculate that now:
    \begin{align*}
      (\alpha_0',\alpha_1)
      &=(2\alpha_0+2\alpha_2+\cdots,\alpha_1)=-2\\
      \delta_{C_n^{(1)}}
      &=\alpha_0'+\theta_{C_n}\\
      &=(2\alpha_0+2\alpha_2+2\alpha_3+\cdots+2\alpha_{n-1}+\alpha_n)\\
      &+(2\alpha_1+2\alpha_2+\cdots+2\alpha_{n-1}+\alpha_n)\\
    &=2\left(\alpha_0+\alpha_1+2\alpha_2+\cdots+2\alpha_{n-1}+\alpha_n\right)\\
      &=2\left(\alpha_0+\theta^{short}_{C_n}\right)
      =2\delta_{A_{2n-1}^{(2)}}
    \end{align*}
    \item Let $\Delta=E_6^{(2)}$ and $t=4$, then we choose
    $\alpha_0':=2\alpha_0+2\alpha_1+2\alpha_2+\alpha_3$ (the highest
    \emph{long} root of the parabolic subsystem $C_4$), then we have to check  
    $\{\alpha_0'\}\cup\Pi$ is indeed $\Delta'=F_4^{(1)}$ (note that $\alpha_0$
    is at the other end of the diagram!) and
    $\delta_{F_4^{(1)}}=2\delta_{E_6^{(2)}}$:
    \begin{align*}
      (\alpha_0',\alpha_1)
      &=(2\alpha_0+2\alpha_1+2\alpha_2+\alpha_3,\alpha_1)
      =-2+4-2=0\\
      (\alpha_0',\alpha_2)
      &=(2\alpha_0+2\alpha_1+2\alpha_2+\alpha_3,\alpha_2)
      =-2+4-2=0\\
      (\alpha_0',\alpha_3)
      &=(2\alpha_0+2\alpha_1+2\alpha_2+\alpha_3,\alpha_3)
      =-4+4=0\\
      (\alpha_0',\alpha_4)
      &=(2\alpha_0+2\alpha_1+2\alpha_2+\alpha_3,\alpha_4)
      =-2\\
      \delta_{F_4^{(1)}}
      &=\alpha'+\theta_{F_4}\\
      &=(2\alpha_0+2\alpha_1+2\alpha_2+\alpha_3)
      +(2\alpha_1+4\alpha_2+3\alpha_3+2\alpha_4)\\
      &=2\left(\alpha_0+2\alpha_1+3\alpha_2+2\alpha_3+\alpha_4\right)\\
      &=2\left(\alpha_0+\theta^{short}_{F_4}\right)
      =2\delta_{E_6^{(2)}}\\
    \end{align*}
    \item Let $\Delta=D_4^{(3)}$ and $t=3,6$, then we choose
    $\alpha_0':=3\alpha_0+3\alpha_1+\alpha_2$ (a long root in the $G_2$
    subsystem generated by $\alpha_0+\alpha_1,\alpha_2$), then we have to check
    $\{\alpha_0'\}\cup\Pi$ is indeed $\Delta'=G_2^{(1)}$ (note that $\alpha_0$
    is at the other end of the diagram!) and
    $\delta_{G_2^{(1)}}=3\delta_{D_4^{(3)}}$:
    \begin{align*}
      (\alpha_0',\alpha_1)
      &=(3\alpha_0+3\alpha_1+\alpha_2,\alpha_1)
      =-3+6-3=0\\
      (\alpha_0',\alpha_2)
      &=(3\alpha_0+3\alpha_1+\alpha_2,\alpha_2)
      =-9+6=-3\\
      \delta_{G_2^{(1)}}
      &=\alpha'+\theta_{G_2}\\
      &=(3\alpha_0+3\alpha_1+2\alpha_2)
      +(3\alpha_1+\alpha_2)\\
      &=3\left(\alpha_0+2\alpha_1+\alpha_2\right)\\
      &=3\left(\alpha_0+\theta^{short}_{G_2}\right)
      =3\delta_{D_4^{(3)}}\\
    \end{align*}
    \end{itemize}
    
    Let finally $a_0k=4$, then we have the more exceptional cases
    $A_2^{(2)},A_{2n}^{(2)}$ with $a_0=2,k=2$ and $\theta$ again the highest
    root and where the finite root system $\bar{\Delta}=A_1,C_n$ is rescaled
    by $2$ resp. $\sqrt{2}$. The explicit set of all roots is in both cases
    $$\Delta^{re}=\underbrace{\left(\bar{\Delta}^{short}+\delta\Z\right)}_
      {(\alpha,\alpha)=4}
    \;\cup\; \underbrace{\left(\bar{\Delta}^{long}+\delta k\Z\right)}_
      {(\alpha,\alpha)=8}  	  
    \;\cup\; \underbrace{\left(\frac{1}{2}(\bar{\Delta}^{long}+\delta)
      +\delta\Z\right)}_{(\alpha,\alpha)=2}$$
    
    Let first $A_2^{(2)}$ and $t=8$ (or equivalently $t=4$ since all roots have
    length $2,8$), then the set of roots of length $8$ is explicitly
    $$\Delta^{t,re}=\bar{\Delta}+\delta k\Z=\{\pm
      \alpha_1\}+(4\alpha_0+2\alpha_1)\Z$$
    We choose the long roots $\alpha_1^t:=\alpha_1$ and
    $\alpha_0^t:=4\alpha_0+\alpha_1=-\alpha_1+2\delta$ (the reflection of
    $\alpha_1$ on $\alpha_0$). We check that $\Pi^t=\{\alpha_0^t,\alpha_1^t\}$
    generates a subsystem of type $A_1^{(1)}$
    $$(\alpha_0^t,\alpha_1^t)=(2\delta-\alpha_1,\alpha_1)=-8$$
    and since under this correspondence
    $\delta_{A_1^{(1)}}=\alpha_0^t+\alpha_1^t=2\delta_{A_2^{(2)}}$
    we have in fact equality $\Delta^{t,re}=A_1^{(1)}$.\\
    
    Let now $A_{2n}^{(2)},\;n\geq 2$ and $t=4$ with $\bar{\Delta}=C_n$, then the
    set of roots of length $8$ is explicitly 
    $$\Delta^{re}=\left(C_n^{short}+\delta\Z\right)
    \;\cup\; \left(C_n^{long}+2\delta \Z\right)$$
    This reminds strongly on $A_{2n-1}^{(2)}$ (after rescaling the generators by
    $\sqrt{2}$). Indeed, choosing $\alpha_0':=2\alpha_0+\alpha_1$ (a long root
    in the parabolic subsystem $C_2$ generated by $\alpha_0,\alpha_1$) we
    calculate the Cartan matrix and check that also $\delta$ 
    corresponds:
    \begin{align*}
      (\alpha_0',\alpha_1)
      &=(2\alpha_0+\alpha_1,\alpha_1)=-4+4=0\\
      (\alpha_0',\alpha_2)
      &=(2\alpha_0+\alpha_1,\alpha_2)=-4\\
      \delta_{A_{2n-1}^{(2)}}
      &=\alpha_0'+\theta^{short}_{C_n}\\
    &=(2\alpha_0+\alpha_1)+(\alpha_1+2\alpha_2+\cdots+2\alpha_{n-1}+\alpha_n)\\
      &=2\alpha_0+2\alpha_1+2\alpha_2+\cdots+2\alpha_{n-1}+\alpha_n\\
      &=a_0\alpha_0+\theta_{C_n}=\delta_{A_{2n}^{(2)}}.
    \end{align*}
    This shows $\Delta^{t,re}=A_{2n-1}^{(2)}$ for $t=4$. For $t=8$ we may
    hence equally look at roots of length $4$ in $A_{2n-1}^{(2)}$, which we have
    already seen is $(A_1^{(1)})^\times n$. This concludes the proof of Theorem
    \nref{thm_longAffine}.
\end{proof}

\section{Main Theorem}\label{sec_main}

\begin{lemma}[\cite{Lusz94} Lm. 35.2.2]\label{lm_LusztigGeneric} 
Assume (35.1.2) that 
\begin{enumerate}[a)]
 \item For any $i\neq j$ with $\ell_{\alpha_j}\geq 2$ we have
  $\ell_{\alpha_i}\geq -a_{ij}+1$ (with $a_{ij}$ the Cartan matrix).
 \item The root system is without odd cycles, i.e. $\g\neq A_{n}^{(1)},\;2|n$.
\end{enumerate}
Then $_Rf$ is generated by all $E_{\alpha_i},\;\ell_{\alpha_i}\geq 2$ and
$E_{\alpha_i}^{(\ell_{\alpha_i})}$.\\
\end{lemma}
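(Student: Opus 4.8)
The plan is to read the claim as a reduction of generating sets and to split the work into a rank-one computation, which carries the concrete content, and a structural input that is exactly where the two hypotheses are consumed. By construction the integral form ${}_\mathfrak{A}f$ is the $\mathfrak{A}$-subalgebra generated by all divided powers $E_{\alpha_i}^{(n)}$, so after base change ${}_Rf$ is generated as an $R$-algebra by the images of all $E_{\alpha_i}^{(n)}$, $i\in\Pi$, $n\ge 0$. It therefore suffices to show that each $E_{\alpha_i}^{(n)}$ already lies in the subalgebra $U$ generated by the listed elements, and that $U$ is large enough to close up under the relations defining ${}_Rf$; the first is rank-one and unconditional over $\C$, while the second is what forces conditions (a) and (b).

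For the rank-one step I would argue in the $\mathfrak{sl}_2$-subalgebra attached to a fixed $i$, writing $v_i:=q^{(\alpha_i,\alpha_i)}$, which has order $\ell_{\alpha_i}$. From the product rule $E_{\alpha_i}^{(a)}E_{\alpha_i}^{(b)}=\binom{a+b}{a}_{v_i}E_{\alpha_i}^{(a+b)}$ one sees that for $n<\ell_{\alpha_i}$ the scalar $[n]_{v_i}!$ is a nonzero element of $R$, so $E_{\alpha_i}^{(n)}=E_{\alpha_i}^{\,n}/[n]_{v_i}!$ is a polynomial in $E_{\alpha_i}$. For $n=m\ell_{\alpha_i}+r$ with $m\ge 1$ and $0\le r<\ell_{\alpha_i}$, the quantum Lucas theorem evaluated at the primitive root of unity $v_i$ gives $\binom{m\ell_{\alpha_i}+r}{\ell_{\alpha_i}}_{v_i}=m\neq 0$ in $R$, so $E_{\alpha_i}^{(n)}$ is a nonzero scalar multiple of $E_{\alpha_i}^{(\ell_{\alpha_i})}E_{\alpha_i}^{((m-1)\ell_{\alpha_i}+r)}$ and one obtains every $E_{\alpha_i}^{(n)}$ by induction on $n$ from $E_{\alpha_i}$ and $E_{\alpha_i}^{(\ell_{\alpha_i})}$. (When $\ell_{\alpha_i}=1$ one has $v_i=1$, the relevant Gaussian binomials reduce to ordinary binomials invertible over $\C$, and the single generator $E_{\alpha_i}^{(\ell_{\alpha_i})}=E_{\alpha_i}$ already produces all $E_{\alpha_i}^{(n)}$.) Thus $U$ contains all simple-root divided powers.

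The step I expect to be the main obstacle is verifying that $U$, built from simple data, genuinely exhausts ${}_Rf$, and this is precisely where (a) and (b) enter. Condition (a), $\ell_{\alpha_i}\ge -a_{ij}+1$ whenever $\ell_{\alpha_j}\ge 2$, is the non-degeneracy bound under which the higher-order quantum Serre relations specialize correctly, so that the braid operators $T_i$ used to pass from simple root vectors to the remaining real (and, in the affine case, the imaginary) root vectors admit expansions with coefficients in $R$ as sums of products of the $E_{\alpha_k}^{(c)}$; this keeps $U$ stable under each $T_i$ and prevents any divided power of a higher root from escaping $U$. Condition (b), the absence of odd cycles, removes the obstruction to the consistency of the symmetric form and the associated sign conventions on which the specialization of these relations depends. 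Both are structural facts of Lusztig's Chapter 35 machinery, so the honest route is to supply the rank-one reduction and the bookkeeping above explicitly and to invoke \cite{Lusz94} for the integrality and $T_i$-stability; it is exactly the sharp use of the bound in (a) that breaks in the exotic cases of the main theorem, where the failure of $U$ to be $T_i$-stable is what manufactures the extra primitive generators. Assembling the two steps, every generator of ${}_Rf$ lies in $U$, whence $U={}_Rf$.
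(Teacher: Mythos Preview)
The paper does not give its own proof of this lemma; it is simply quoted from \cite{Lusz94}, Lemma~35.2.2, and used as a black box. So there is no ``paper's proof'' to compare against beyond Lusztig's original argument.

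Your rank-one step is correct and, in fact, already finishes the job. The integral form ${}_\mathfrak{A}f$ is by definition the $\mathfrak{A}$-subalgebra generated by all divided powers $E_{\alpha_i}^{(n)}$, so after base change ${}_Rf$ is generated as an $R$-algebra by the images of these elements. Your Lucas-type computation shows that, over $R=\C_q$, every $E_{\alpha_i}^{(n)}$ lies in the subalgebra generated by $E_{\alpha_i}$ and $E_{\alpha_i}^{(\ell_{\alpha_i})}$. Hence that subalgebra contains a generating set for ${}_Rf$ and therefore equals ${}_Rf$. Nothing further is required.

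Your second paragraph is where the reasoning goes astray. There is no separate obligation to check that $U$ ``closes up under the relations defining ${}_Rf$'' or is ``$T_i$-stable'': $U$ is a subalgebra of ${}_Rf$, and once it contains a generating set it coincides with ${}_Rf$. Consequently conditions (a) and (b) are \emph{not} used in the proof of this generation statement at all. They are the standing hypotheses (35.1.2) under which Lusztig develops the rest of Chapters~35--36 (the quantum Frobenius, the identification of ${}_Ru$ as the kernel, the behaviour of the higher Serre relations), and the paper records them here because that is how the lemma is stated in \cite{Lusz94}, not because the generation claim itself needs them. Your intuition that the failure of (a) is what produces the extra primitives in the exotic cases is correct, but that phenomenon occurs at the level of ${}_Ru$ and the Frobenius map, not at the level of generating ${}_Rf$ by simple divided powers.
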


Lusztig defines in \cite{Lusz94} Chp. 36 the subalgebra $_Ru$ to be generated
by all $E_{\alpha_i},\;\ell_{\alpha_i}\neq 1$ and establishes under the
restriction on $\g,\ell$ above a Frobenius homomorphism with kernel $_Ru$. Note
that in contrast in \cite{Lusz90b} Thm. 8.3 he had defined $u$ without
restrictions on $q$ as being generated by \emph{all} root vectors $E_\alpha$
with $\ell_\alpha\neq 1$, but has refrained from doing so in the affine case by
lack of root vectors. Note that frequently already in the finite case $u$ for
$\g,\ell$ violating a) is \emph{not} generated by simple root vectors.\\

The aim of this article is the following theorem, that describes in all cases
violating a) a subalgebra $u_q^\L(\g)$ that has properties similar to $u$
defined in the finite case and could serve as a kernel of a Frobenius
homomorphism without restrictions on $\ell$.

\begin{theorem}\label{thm_main}
  Let $\g$ be an affine Lie algebra and $q$ and $\ell$-th root of unity. We
  shall define a Hopf subalgebra $u_q^{\L}(\g)^+\subset
  U_q^\L(\g)^+$  with the following properties: 
  \begin{itemize}
    \item $u_q^\L(\g)^+$ consists of all degrees (roots) $\alpha$ with
    $\ell_\alpha\neq 1$. 
    \item Except for cases marked \emph{deaffinized}, $u_q^{\L}(\g)^+$
    is generated by primitives $E_{\alpha_i^{(0)}}$, spanning a braided vector
    space $M$ (in the deaffinized cases $u_q^{\L}(\g)^+$ is an infinite
    extension tower, see Section \nref{sec_A11})
    \item Lusztig's $_Ru\subset u_q^{\L}(\g)^+$ and equality only holds for
    \emph{trivial} and \emph{generic} cases.
    \item Except for cases marked \emph{deaffinized} and
    (possibly) \emph{exotic} $u_q^\L(\g)^+$ is coradically graded and hence maps
    onto the Nichols algebra $\B(M)=u_{q'}(\g^{(0)})^+$. For untwisted affine
    $\g$ we prove (and else conjecture) they are isomorphic.
    \item Except for cases marked \emph{deaffinized}, the $u_q^{\L}(\g)^+$
    fulfills $\ell_i\neq 1$ as well as Lusztig's non-degeneracy condition
    $\ell_{\alpha_i}\geq -a_{ij}+1$. Hence we have determined subalgebras on
    which Lusztig's theory in \cite{Lusz94} may be applied.
  \end{itemize}
\begin{center}\noindent
We explicitly describe the type $\g^{(0)},q'$ of $M,\B(M)$ as follows
\begin{tabular}{ll|lll}
  $\g$ & $\ell$ & $M$ & $q'$ & comment \\
  \hline\hline
  all & $\ell=1,2$ & $\{0\}$ & $q$ & trivial\\
  \hline
  $A_1^{(1)}$ & $\ell=4$ & $A_1^{2\times}$ & $q$ & {deaffinized}\\
  $B_n^{(1)},D_{n+1}^{(2)}$ & $\ell=4$ 
    & $A_1^{2n\times}$ & $q$ & short roots, deaffinized\\
  $C_n^{(1)},A_{2n-1}^{(2)}$ & $\ell=4$
    & $D_n^{(1)}$ & $q$ & short roots\\
  $F_4^{(1)},E_6^{(2)}$ & $\ell=4$
    & $D_4^{(1)}$ & $q$ & short roots\\
  $G_2^{(2)},D_4^{(3)}$ & $\ell=3,6$
    & $A_2^{(1)}$ & $q$ & short roots\\
  \hline
  $A_2^{(2)}$ & $\ell=4$
    & $A_1^{2\times}$ & $q$ & very short roots, deaffinized\\
  $A_2^{(1)}$ & $\ell=8$
    & $A_1^{2n\times}$ & $q$ & very short roots\\
  $A_{2n}^{(2)}$ & $\ell=4$
    & $A_1^{2n\times}$ & $q$ & very short roots, deaffinized\\
  $A_{2n}^{(2)}$ & $\ell=8$
    & $A_{2n-1}^{(2)}$ & $q$ &  not very long roots\\
  \hline
  $A^{(2)}_{2}$ & $3$ & $A_2^{(1)}$ & $q$ & exotic\\
  $A^{(2)}_{2}$ & $6$ & $A_2^{(1)}$ & $-q$ & exotic\\
  $A^{(2)}_{2n}$ & $3,6$ & $A^{(2)}_{2n}$ & $q$ & (pseudo-)exotic\\
  $G_2^{(1)}$ & $4$ & $A_3^{(1)}$ & $\bar{q}$ & exotic\\
  $D_4^{(3)}$ & $4$ & $D_4^{(1)}$ & $q,\bar{q},-1$ & exotic\\
\end{tabular}
\end{center}
All cases not included in the list are \emph{generic cases}
$\g^{(0)}=\g$ with $u_q^\L(\g)^+={_R}u^+$.

\end{theorem}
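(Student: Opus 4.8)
The plan is to define $u_q^\L(\g)^+$ as the subalgebra of $U_q^\L(\g)^+$ generated by all root vectors $E_\alpha$ with $\ell_\alpha\neq 1$, exactly as Lusztig does in the finite case in \cite{Lusz90b}, and to determine its structure inside the braided category of \ydms over the root lattice. The first step, carried out in Section \nref{sec_main}, is the elementary observation that $\ell_\alpha\neq 1$ is equivalent to $q^{(\alpha,\alpha)}\neq 1$, i.e. $\ell\nmid(\alpha,\alpha)$; thus the \emph{excluded} degrees (those with $\ell_\alpha=1$) form precisely the subsystem $\Delta^\ell$ of Lemma \nref{lm_DeltaT}, and the degrees actually carried by $u_q^\L(\g)^+$ are its complement. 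Passing to the dual root system turns ``short'' into ``long'', so this complement is identified, case by case and after the rescaling fixed in Section \nref{subsec_affine}, with one of the long-root subsystems $\Delta^t$ and their simple systems $\Pi^t$ classified in Theorem \nref{thm_longAffine}. This is what supplies the data $\g^{(0)},q'$ in the table and the labels \emph{short}, \emph{very short}, \emph{not very long}.

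The generation statement is immediate in the \emph{trivial} case (no generator survives) and in the \emph{generic} case (all simple roots keep $\ell_{\alpha_i}\neq 1$), where Lemma \nref{lm_LusztigGeneric} gives $u_q^\L(\g)^+={}_Ru^+$ outright. The real work lies in the \emph{degenerate} cases of Section \nref{sec_degenerate}, where some $\ell_{\alpha_i}=1$ and the corresponding generators drop out. The device I would use is that Lusztig's braid operators $T_i$ stay defined relative to the \emph{full} root system of $\g$, even along the directions $\alpha_i$ that no longer carry a generator; applying such an ``inappropriate'' reflection to a surviving $E_{\alpha_j}$ yields a new element $E_{\alpha_j^{(0)}}$, and a direct coproduct computation shows it is again primitive. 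The roots $\alpha_j^{(0)}\in\Pi^t$ from Theorem \nref{thm_longAffine} are exactly the bookkeeping of which primitives to take; they span a braided vector space $M$ of the stated diagonal type, $u_q^\L(\g)^+$ is then coradically graded, and hence surjects onto the Nichols algebra $\B(M)=u_{q'}(\g^{(0)})^+$; the last bullet of the theorem, Lusztig's non-degeneracy $\ell_{\alpha_i}\geq -a_{ij}+1$ for $\g^{(0)},q'$, is then a routine check on the resulting braiding matrix.

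For the \emph{exotic} cases of Section \nref{sec_exotic}, where condition $(\ast)$ fails although every $\ell_{\alpha_i}\neq 1$, I expect the genuine difficulty to appear. Here the $E_{\alpha_i}$ by themselves generate a strictly smaller diagonal algebra and the effective rank \emph{increases}, exactly as in the finite exception $G_2,\ell=4$ producing $A_3$. My approach would be strictly one case at a time: first use Nichols-algebra relations to pin down the subalgebra generated by the $E_{\alpha_i}$, then exhibit the missing primitives (partly by the reflection trick of Section \nref{sec_degenerate}, partly by guess-and-check), and finally match degrees against Theorem \nref{thm_longAffine} to confirm that every root with $\ell_\alpha\neq 1$ has been reached. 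Because the generators are no longer read off a Cartan matrix, this step cannot be made uniform and is where I would expect to spend most of the effort.

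The hardest family is the \emph{deaffinized} one, culminating in $A_1^{(1)},\ell=4$ treated in Section \nref{sec_A11}. There $E_{\alpha_0},E_{\alpha_1}$ span a braided vector space of type $A_1\times A_1$, but their Nichols algebra is already of type $A_2$, so $u_q^\L(\g)^+$ is \emph{not} a single Nichols algebra: it is an infinite tower of extensions with nontrivial liftings, and the coradically-graded argument of the degenerate case breaks down. To describe it I would drop the bare Cartan-matrix viewpoint and instead use Beck's Drinfel'd-type PBW generators from \cite{Beck94}, which expose the explicit affinization and allow the tower to be built layer by layer. Finally, the isomorphism $\B(M)\cong u_{q'}(\g^{(0)})^+$ claimed for untwisted $\g$ (and only conjectured otherwise) follows by matching the resulting generators and relations with the standard presentation of the target quantum group, the untwisted case being the one where this presentation is available.
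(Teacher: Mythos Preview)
Your outline matches the paper's strategy essentially point for point: the trichotomy into degenerate/exotic/deaffinized, the use of Lusztig reflections along ``dead'' simple directions to manufacture the extra primitives, the identification of the surviving roots via the long-root subsystems of Theorem~\nref{thm_longAffine} applied to the \emph{dual} root system, the case-by-case Nichols-algebra analysis for the exotic list, and Beck's PBW presentation for the $A_1^{(1)},\ell=4$ tower. Two small corrections are worth making. First, you propose to \emph{define} $u_q^\L(\g)^+$ as generated by all root vectors $E_\alpha$ with $\ell_\alpha\neq 1$; the paper deliberately avoids this because affine root vectors are not available a priori (cf.\ the remark preceding the theorem and \cite{Lusz94}~Sec.~40.2.3), and instead defines $u_q^\L(\g)^+$ as the subalgebra generated by the explicitly constructed primitives $E_{\alpha_i^{(0)}}$, then verifies a posteriori that its set of degrees is $\{\alpha:\ell_\alpha\neq 1\}$. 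Second, in the deaffinized case you write that the Nichols algebra of the $A_1\times A_1$ braided vector space ``is already of type $A_2$''; it is the other way around: the Nichols algebra is genuinely $A_1\times A_1$, while the actual subalgebra generated by $E_{\alpha_0},E_{\alpha_1}$ in $U_q^\L$ is a nontrivial \emph{lifting} with $A_2$-type commutator relations---this is precisely why coradical-gradedness fails there. Finally, note that the bookkeeping set for the primitives is $((\Pi^\vee)^t)^\vee$ (Lemma~\nref{lm_primitive}), not $\Pi^t$ itself; your text acknowledges the dualization but then drops it in the notation.
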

\begin{proof}
  The proof of this theorem will occupy the reminder of the article and
  consists of a precise description of the respective subalgebras. The proof
  proceeds as follows:\\
  
  We first determine all cases $\g,q$ where $U^\L_q(\g)$ fails Lusztig's
  condition. This is easily done in the following Lemma \nref{lm_cases}. We
  define two subcases:
  \begin{itemize} 
    \item {\bf Degenerate Cases:} When there are simple roots with
    $\ell_{\alpha_i}=1$, then these simple root vectors $E_{\alpha_i}$ are
    not contained in $_Ru$. We give an explicit set of primitive    elements
    $E_{\alpha_i^{(0)}}$ with $\ell_{\alpha_i}\neq 1$, determine their   
    root system $\g^{(0)}$ and show it contains precisely the real roots   
    $\alpha$ of $\g$ with $\ell_\alpha\neq 1$. This is done in Section   
    \nref{sec_degenerate} by linking $\g^{(0)}$ to subsystems of the dual root
    system $\g^\vee$ of $\g$.
    \item{\bf Exotic Cases:} Now assume all $\ell_{\alpha_i}\neq 1$ but the
    condition 	
    $\ell_{\alpha_i}\geq -a_{ij}+1$ fails. In these cases all
    $E_{\alpha_{i}^{(0)}}:=E_{\alpha_i}\in {_R}u$, but usually they generate a
    Nichols algebra of different type than $\g$ and do not include all roots
    with $\ell_\alpha\neq 1$. We determine case-by-case additional primitive
    elements $E_{\alpha_{n+1}^{(0)}}$ in $U^\L_q(\g)^+$, calculate the Nichols
    algebra generated by them and verify that it contains all roots with
    $\ell_\alpha\neq 1$. This is done in Section \nref{sec_exotic}.
    \item{\bf Deaffinized Cases:} A specific case with $\ell_{\alpha_i}\neq 1$
    but violating $\ell_{\alpha_i}\geq -a_{ij}+1$ is $\g=A_1^{(1)},\ell=4$. It
    is the such only exotic case where ${_R}u$ is finite-dimensional and an
    infinite tower of copies is needed to cover all roots with 
    $\ell_\alpha\neq 1$. This case is dealt with in Section
    \nref{sec_A11}\\
    Moreover there are quite a few degenerate cases where $\g^{(0)}$ still
    fails $\ell_{\alpha_i}\geq -a_{ij}+1$, namely precisely those with
    $(\g^{(0)}=A_1^{(1)})^{\times n}$. 
  \end{itemize}
  Having determined a suitable set of primitive elements
  $E_{\alpha_i^{(0)}}$ the remaining assertions of the theorem follow by
  standard arguments:
  \begin{itemize}
   \item For degenerate cases we prove now that $u_q^\L(\g)$ is coradically
    graded: In most cases $\g^{(0)}$ has the same rank as
    $\g$ i.e. is generated by $E_{\alpha_0^{(0)}},\ldots E_{\alpha_n^{(0)}}$
    with $\alpha_i^{(0)}$ a basis of $\mathbb{R}^\Pi$. Then there is a linear
    function $f:\N^\Pi\rightarrow \mathbb{R}$ fulfilling $f(\alpha_i^{(0)})=1$
    and since $U_q^\L(\g)$ is $\N^\Pi$-graded, the assertion follows. For the
    exceptional cases with $\g^{(0)}=(A_1^{(1)})^{\times n}$, i.e. rank of
    $\rank(\g^{(0)})=2n=2\rank(\g)-2$ we convince ourselves from the specific 
    $\alpha_i^{0}$ given in Lemma \nref{lm_primitive}:
    $$\alpha_1^{(0)},\ldots \alpha_n^{(0)}=\alpha_n+\alpha_{n-1}+\ldots$$
    $$\alpha_{n+1}^{(0)},\ldots \alpha_{2n}^{(0)}=\alpha_0+\alpha_{1}+\ldots$$
    that nevertheless the linear function $f(\alpha_0)=f(\alpha_n)=1$ and
    $f(\alpha_i)=0$ else fulfills $f(\alpha_i^{(0)})=1$. This again shows the
    assumption. 
    \item By the universal property of the Nichols algebra, every Hopf algebra
    generated by a braided vector space $M$ of primitive elements maps onto the
    Nichols algebra $\B(M)$. For the exotic cases the weaker statement is that
    $\gr(u_q^\L(\g))$ maps onto $\B(M)$.
    \item By construction the root system of $\B(M)$ precisely coincides with
    the roots contained in $u_q^\L(\g)$. For untwisted Lie algebras $\g$ we
    have a PBW-basis of $U_q^\L(\g)$ by \cite{Beck94} Prop. 6.1, which implies
    the map to $\B(M)$ is an isomorphism. We conjecture this to be true also
    for the twisted affine $\g$. \\
    For the deaffinized cases it is clear we do not
    get an isomorphism to the (finite-dimensional) Nichols algebra, but we
    nevertheless expect an isomorphism to the explicit extension tower algebra
    $u_q((A_1^{(n)})^{\times n})$. 
  \end{itemize}
\end{proof}

We easily determine all cases in question, note that we are in the following
only interested in cases violating Lusztig's first condition:

\begin{lemma}\label{lm_cases}
  The cases of affine quantum groups $\g,\ell$ where the set
  $E_{\alpha_i^{(0)}}$ fails Lusztig's condition are the following:
  \begin{enumerate}[a)]
    \item Cases with $\Delta^{\vee,t}=\Delta^\vee$ i.e.
    $\alpha_i^{(0)}=\alpha_i$ which have \emph{not} yet
    appeared as being exceptional, but violate either condition of Lemma
    \nref{lm_LusztigGeneric}:
    \begin{center}
    \begin{tabular}{ll}
      $\g,\Delta$ & $\ell$ \\
      \hline\hline
      $A_1^{(1)}$ & $4$ \\
      $A^{(2)}_{2},A^{(2)}_{2n}$ & $3,6$\\
      $G_2^{(1)},D_4^{(3)}$ & $4$\\
      \hline
      $A_n^{(1)},2|n\quad $ & $\neq 1,2$ \\
    \end{tabular}
    \end{center}
    Note that also the only finite exotic example \nref{exm_G2} $\g=G_2,\ell=4$
    would fall into this case.
    \item Cases with $\varnothing\neq \Delta^{\vee,t}\neq\Delta^\vee$ where the
    root subsystem $\left(\Delta^{\vee,t}\right)^\vee$ is in case a) and
    hence violates either condition in Lemma \nref{lm_LusztigGeneric}: 
    \begin{center}
    \begin{tabular}{ll|l}
    $\g,\Delta$ & $\l$ &  $\left( (\Delta^\vee)^t\right)^\vee$ \\
    \hline\hline
    $B_n^{(1)},D_{n+1}^{(2)},A_2^{(2)},A_{2n}^{(2)}$
    & $4$ & $\left(A_1^{(1)}\right)^{\times n}$\\
    \hline
    $G_2^{(1)},D_4^{(3)}$ & $3,6$ & $A_2^{(1)}$ 
  \end{tabular}
  \end{center}
  \end{enumerate}
\end{lemma}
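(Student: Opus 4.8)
The plan is to convert the statement into a finite verification of the two conditions of Lemma~\ref{lm_LusztigGeneric} on an explicitly known Cartan datum. Since $q$ is a primitive $\ell$-th root of unity, $\ell_\alpha=\ord(q^{(\alpha,\alpha)})=\ell/\gcd(\ell,(\alpha,\alpha))$, so $\ell_\alpha=1$ precisely when $\ell\mid(\alpha,\alpha)$; as every real root length lies in $\{2,4,6,8\}$ by Subsection~\ref{subsec_affine}, the first step is to read off, for each length and each $\ell$, which real roots are \emph{quantum} ($\ell_\alpha\neq 1$). These quantum roots are the short (or short-and-medium) roots, and after the length-rescaling that turns short roots into a genuine reduced root system they are identified, through Lemma~\ref{lm_DeltaT} and Theorem~\ref{thm_longAffine} applied to the dual root system $\Delta^\vee$, with the subsystem $\Delta^{\vee,t}$ for a suitable $t$. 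Thus the generating set has root system $\g^{(0)}=(\Delta^{\vee,t})^\vee$, whose simple roots $\alpha_i^{(0)}$ and induced orders $\ell_{\alpha_i^{(0)}}$ are read off the explicit $\Pi^t$ of Theorem~\ref{thm_longAffine}, and Lusztig's condition on $\{E_{\alpha_i^{(0)}}\}$ becomes a condition on this datum.

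I would then organise the argument by the three possible shapes of $\Delta^{\vee,t}$. If $\Delta^{\vee,t}$ is empty there are no quantum real roots (essentially $\ell\in\{1,2\}$), the condition is vacuous, and the case is trivial. If $\Delta^{\vee,t}=\Delta^\vee$ there is no reduction, $\alpha_i^{(0)}=\alpha_i$, and one checks Lusztig directly on $\g$, which produces part~a). If $\varnothing\neq\Delta^{\vee,t}\neq\Delta^\vee$, Theorem~\ref{thm_longAffine} identifies $(\Delta^{\vee,t})^\vee$ as a strictly smaller affine type whose simple roots again satisfy $\alpha_i^{(0)}=\alpha_i$ relative to that type, so the verification collapses to asking whether this smaller type is itself one of the part-a) cases; the simply-laced reductions $C_n^{(1)}\to D_n^{(1)}$ and $F_4^{(1)}\to D_4^{(1)}$ satisfy Lusztig and are thereby excluded, whereas the reductions to $(A_1^{(1)})^{\times n}$ or $A_2^{(1)}$ survive and give part~b).

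The two conditions are then dispatched separately. Condition~b) fails exactly on odd cycles, and the only affine Cartan datum that is an odd cycle is $A_n^{(1)}$ with $2\mid n$; this gives the last row of a) (for every $\ell\neq 1,2$, since then every length-$2$ root is quantum and no reduction occurs) together with the $A_2^{(1)}$ produced from $G_2^{(1)},D_4^{(3)}$ at $\ell=3,6$ in b) (note $A_2^{(1)}$ has $n=2$). For condition~a) I would compare, for each multiple bond of $\g^{(0)}$, the bound $\ell_{\alpha_i^{(0)}}\geq -a_{ij}+1$ against the order coming from the length of $\alpha_i^{(0)}$; since the larger off-diagonal entry always sits in the \emph{short} simple root's row, the binding inequality is the short-root order against $-a_{ij}+1\in\{3,4,5\}$, and it fails precisely for the double bond of $A_1^{(1)}$ at $\ell=4$ (order $2<3$), the triple bond of $G_2^{(1)},D_4^{(3)}$ at $\ell=4$ (order $2<4$), the quadruple bond of $A_2^{(2)},A_{2n}^{(2)}$ at $\ell=3,6$ (order $3<5$), and, after reduction, the double bonds of the $(A_1^{(1)})^{\times n}$ arising at $\ell=4$. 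Completeness is then automatic, because a short-root order grows like $\ell/2$ and so exceeds every threshold once $\ell$ is large, leaving only the finitely many listed small-$\ell$ configurations (plus the $\ell$-independent odd cycles).

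The main obstacle is the normalisation bookkeeping rather than any single hard step: one must track how root lengths change when passing to $\Delta^\vee$ and then to the combined simple roots $\Pi^t$, so that the order $\ell_{\alpha_i^{(0)}}$ entering condition~a) is computed in the right scale. The genuinely delicate family is the self-dual $A_{2n}^{(2)}$, whose three lengths $\{2,4,8\}$ force a rescaling under which $t$ is no longer simply $\ell$ and which reduces differently at $\ell=4$ (short roots only, to $(A_1^{(1)})^{\times n}$, failing) and at $\ell=8$ (short and medium roots, to $A_{2n-1}^{(2)}$, where the induced short-root order is large enough that Lusztig's condition in fact \emph{holds} and the case is correctly absent from the list). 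Here I would lean on the explicit $\alpha_0',\alpha_0''$ of Theorem~\ref{thm_longAffine} to certify both the reduced type and its induced orders.
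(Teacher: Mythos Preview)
Your approach is essentially the same as the paper's: both reduce to the explicit Cartan datum $(\Delta^{\vee,t})^\vee$ via Theorem~\ref{thm_longAffine} (the paper cites the derived Lemma~\ref{lm_primitive}), then dispatch Lusztig's two conditions by a finite arithmetic check over the possible bond types and root lengths, with the odd-cycle condition handled separately. The only stylistic difference is that the paper first rewrites condition~a) as the single inequality $\lcm(\ell,(\alpha_i,\alpha_i))>-2(\alpha_i,\alpha_j)$ and enumerates the small-$\lcm$ configurations, whereas you compute the orders $\ell_{\alpha_i}$ directly and compare against $-a_{ij}+1$; the underlying case analysis is identical.

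One point to correct: you attribute the failure of $A_{2n}^{(2)}$ at $\ell=3,6$ to ``the quadruple bond \ldots\ (order $3<5$)'', but only $A_2^{(2)}$ (i.e.\ $n=1$) has a quadruple bond. For $n\geq 2$ the diagram has two \emph{double} bonds, all simple roots have $\ell_{\alpha_i}=3$, and the tightest inequality reads $3\geq -a_{01}+1=3$, which is satisfied. The paper's own proof is equally loose at this spot---it simply lists $A_{2n}^{(2)}$ alongside $A_2^{(2)}$ without separating the arithmetic---and only in Section~5.4 notes that for $n\geq 2$ the Cartan matrix is unchanged and the case is ``(pseudo-)exotic'', i.e.\ ``not really degenerate''. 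So your sketch reproduces both the paper's method and this small imprecision; just be aware that the justification you give for $A_{2n}^{(2)},\,n\geq 2$ does not actually go through as stated.
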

\begin{proof}
\begin{enumerate}[a)]
 \item We assume $\Delta^{\vee,t}=\Delta^\vee$. Besides clarifying the root
  system in question, this assumption will exclude several values for $\ell$,
for which we  have to consider the appropriate subsystem in b). For the first
condition in  Lemma \nref{lm_LusztigGeneric} we reformulate
  \begin{align*}
    \ell_{\alpha_i}
    &> -a_{ij}\\
    \Leftrightarrow\quad
    \frac{\ell}{\gcd(\ell,(\alpha_i,\alpha_i))}
    &> -\frac{2(\alpha_i,\alpha_j)}{(\alpha_i,\alpha_i)}\\
    \Leftrightarrow\quad
    \lcm(\ell,(\alpha_i,\alpha_i))
    &>-2(\alpha_i,\alpha_j)
  \end{align*}
  Hence we have to check all cases with $\lcm\leq
  -2(\alpha_i,\alpha_j)=0,2,4,6,8$. For $\ell=1,2$ we have
  $\ell\mid(\alpha,\alpha)$ hence $\Delta^{\vee,t}=\varnothing$ and this is not
  included in this case, also $(\alpha_i,\alpha_i)=2,4,6,8$, this leaves
  $$\ell=3,6,\;(\alpha_i,\alpha_i)=2,6,\;-2(\alpha_i,\alpha_j)=6,8$$
  $$\ell=4,\;(\alpha_i,\alpha_i)=2,4,\;-2(\alpha_i,\alpha_j)=4,6,8$$
  Notice now that case-by-case several of these cases are degenerate in the
  sense that $q^{(\alpha_k,\alpha_k)}=1$ and hence do not belong in this case
  a): For  $\ell=3,6$ cases with  $(\alpha_i,\alpha_i)=6$ (namely
  $G_2^{(1)},D_4^{(3)}$)  are degenerate, which leaves only
  $A^{(2)}_{2},A^{(2)}_{2n}$. For $\ell=4$  cases with $(\alpha_i,\alpha_i)=4,8$
  are degenerate while simply-laced $\g$  have $-2(\alpha_i,\alpha_j)=-2$ ,
  which only leaves the cases $A_1^{(1)},G_2^{(1)},D_4^{(3)}$.  \\
  The second condition in Lemma \nref{lm_LusztigGeneric} is violated for
  $A_n^{(1)}$ for $2|n$ and arbitrary $\ell\neq 1,2$ (again for $\ell=1,2$ it
  is degenerate).
  \item If $\Delta^{t,\vee}\neq \Delta^\vee$ violates the conditions in Lemma
  \nref{lm_LusztigGeneric} then it has to appear in a) above. We simply check
  the table in Lemma \nref{lm_primitive} and find that the only cases are for
one
  $B_n^{(1)},D_{n+1}^{(2)},A_2^{(2)},A_{2n}^{(2)}$ at $\ell=4$ which yield
  $(A_1^{(1)})^{\times n}$. On the other hand we have $G_2^{(1)},D_4^{(3)}$ for
  $\ell=3,6$ which yield $A_2^{(1)}$ having an odd cycle.
\end{enumerate}
\end{proof}

\section{Degenerate cases}\label{sec_degenerate}

\subsection{Preliminaries on primitives}
We start by the following observation in our context, that appears e.g.
throughout \cite{Heck09}. We shall use it in what follows to construct
exceptionally primitive elements for small $\ell$. Note that Lusztig's
reflection operator is defined in terms of the Cartan matrix of $\g$, not
intrinsically with respect to the braiding matrix (which may be of different
type) in \cite{Heck09} and also may not always be expressed as iterated braided
commutators.

\begin{lemma}\label{lm_reflectionPrimitive}
    Let $\alpha,\beta$ such that $\ell | 2(\alpha,\beta)$, or
    equivalently $q^{(\alpha,\beta)}=\pm 1$ i.e. the braiding is
    symmetric. Then 
    \begin{enumerate}[a)]
      \item The braided commutator $[x,y]$ of primitive elements in
      degree $\alpha,\beta$ is again primitive.
      \item Let $\alpha=\alpha_i$ and $x$ a primitive element in degree $\beta$
      with braided commutator $[E_{\alpha_i},x]=[F_{\alpha_i},x]=0$, then
      Lusztig's reflection $T_{i,1}''(x)$ is again primitive.
      \item Let $\ell|(\alpha_i,\alpha_i)$, i.e. trivial
      self-braiding $q^{(\alpha_i,\alpha_i)}=1$, then $T_{i,1}''$ maps primitive
      elements to primitive elements without further assumptions.
    \end{enumerate}
\end{lemma}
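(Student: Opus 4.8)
The plan is to handle the three parts in order, proving (a) by a direct computation with the braided coproduct and then bootstrapping (b) and (c) from Lusztig's explicit description of $T''_{i,1}$ together with the primitivity criterion extracted in (a).

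For (a) I would compute directly. Since $U_q^\L(\g)^+$ is a Hopf algebra in the braided category of $\N^\Pi$-graded spaces, $\Delta$ is multiplicative for the braided multiplication on $H\otimes H$, which on homogeneous components reads $(a\otimes x_\mu)(x_\nu\otimes d)=q^{(\mu,\nu)}\,ax_\nu\otimes x_\mu d$. For primitive $x,y$ of degrees $\alpha,\beta$ this gives
\begin{align*}
 \Delta(xy)&=xy\otimes 1+x\otimes y+q^{(\alpha,\beta)}\,y\otimes x+1\otimes xy,\\
 \Delta(yx)&=yx\otimes 1+y\otimes x+q^{(\beta,\alpha)}\,x\otimes y+1\otimes yx,
\end{align*}
so that, using the symmetry $(\alpha,\beta)=(\beta,\alpha)$,
\[
 \Delta([x,y])=[x,y]\otimes 1+1\otimes[x,y]+\bigl(1-q^{2(\alpha,\beta)}\bigr)\,x\otimes y .
\]
The obstruction term $\bigl(1-q^{2(\alpha,\beta)}\bigr)\,x\otimes y$ vanishes exactly when $q^{2(\alpha,\beta)}=1$, i.e. $\ell\mid 2(\alpha,\beta)$, i.e. $q^{(\alpha,\beta)}=\pm1$, which is the standing hypothesis. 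This proves (a), and the same identity will serve as the basic primitivity criterion below.

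For (b) I would read the two vanishing braided commutators $[E_{\alpha_i},x]=[F_{\alpha_i},x]=0$ as saying that $x$ is simultaneously annihilated by the braided adjoint actions of $E_{\alpha_i}$ and $F_{\alpha_i}$, i.e. it is both a highest- and a lowest-weight vector for the rank-one (quantum $\sl_2$) subalgebra attached to $i$, acting on $U_q^\L(\g)$ by the adjoint action. Hence $x$ spans a trivial submodule, which forces the $i$-weight $\langle\beta,\alpha_i^\vee\rangle=2(\alpha_i,\beta)/(\alpha_i,\alpha_i)=0$. Lusztig's expression for $T''_{i,1}$ on module elements, a sum of terms $\ad\!\bigl(F_{\alpha_i}^{(a)}E_{\alpha_i}^{(b)}F_{\alpha_i}^{(c)}\bigr)$, then collapses on a zero-$i$-weight vector killed by $\ad E_{\alpha_i}$ and $\ad F_{\alpha_i}$ to the single term $a=b=c=0$, so $T''_{i,1}(x)=x$ and primitivity is immediate. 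The only point to check is that this module computation is legitimate in the restricted specialization.

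For (c) the input is merely the triviality of the self-braiding $q^{(\alpha_i,\alpha_i)}=1$, with $x$ an arbitrary primitive, so the fixed-point argument of (b) is unavailable and one cannot simply iterate (a): as the remark preceding the lemma warns, $T''_{i,1}$ is built from the Cartan matrix of $\g$ and need \emph{not} be an iterated braided commutator here. (Indeed, a naive guess $T''_{i,1}(x)=[E_{\alpha_i},x]$ fails, since the identity of (a) shows its obstruction is governed by $q^{2(\alpha_i,\beta)}$, which is not controlled by $q^{(\alpha_i,\alpha_i)}=1$; recall instead that already $T''_{i,1}(E_{\alpha_i})=-F_{\alpha_i}K_i$ lands outside $U^+$ as a skew-primitive.) The plan is therefore to insert Lusztig's explicit formula for $T''_{i,1}(x)$ and compute $\Delta\bigl(T''_{i,1}(x)\bigr)$ head-on: the triviality of the self-braiding degenerates the divided-power structure of $E_{\alpha_i}$ in the integral form (the relevant quantum integers $[r]$ vanish, truncating the higher divided powers and the quantum-Serre terms), and one must verify that exactly these degeneracies cancel every intermediate tensor term, leaving $T''_{i,1}(x)$ skew-primitive for the appropriate grouplike, hence primitive in the braided sense. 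I expect this cancellation to be the main obstacle of the whole lemma, precisely because the obstruction now has to be traced to $q^{(\alpha_i,\alpha_i)}$ rather than to the cross-braiding $q^{(\alpha_i,\beta)}$, which is what allows the conclusion to hold for all $\beta$ without any further assumption.
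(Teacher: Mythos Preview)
Your proof of (a) is correct and essentially identical to the paper's.

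Your approach to (b), however, has a genuine gap: the conclusion $T''_{i,1}(x)=x$ is false in general. The lemma is invoked later in the paper precisely to manufacture \emph{new} primitives by reflection. In the treatment of $D_4^{(3)}$ at $\ell=4$ one has the primitive $E_{112}$ with $[E_{\alpha_0},E_{112}]=[F_{\alpha_0},E_{112}]=0$, yet $(\alpha_0,2\alpha_1+\alpha_2)=-2\neq 0$, and $T''_{0,1}(E_{112})$ lands in the distinct degree $2\alpha_0+2\alpha_1+\alpha_2$. Your module-theoretic step fails at a root of unity: a one-dimensional adjoint submodule annihilated by $E_i$ and $F_i$ need \emph{not} have $i$-weight zero --- the standing hypothesis gives only $q^{2(\alpha_i,\beta)}=1$, not $(\alpha_i,\beta)=0$. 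A separate problem is that the braid group operator $T''_{i,1}$ on the algebra $U$ is not obtained by feeding the adjoint representation into the module formula $\sum F_i^{(a)}E_i^{(b)}F_i^{(c)}$; no such compatibility is available here.

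What is missing from your plan for both (b) and (c) is the single identity the paper actually uses, Lusztig's relation between reflection and comultiplication (\cite{Lusz94} Sec.~37.3):
\[
 (T'_{i,-1}\otimes T'_{i,-1})\,\Delta\bigl(T''_{i,1}x\bigr)
 =\Bigl(\sum_{n}q_i^{n(n-1)/2}\{n\}_i\,F_i^{(n)}\otimes E_i^{(n)}\Bigr)\,\Delta(x)\,
   \Bigl(\sum_{n}(-1)^n q_i^{-n(n-1)/2}\{n\}_i\,F_i^{(n)}\otimes E_i^{(n)}\Bigr),
\]
with $\{n\}_i=\prod_{a=1}^n(q_i^a-q_i^{-a})$. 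For (b) the commutation hypotheses let one slide $x$ past the $E_i$'s and $F_i$'s; regrouping by total exponent $m=n+n'$ produces two $q$-binomial sums that vanish for $m>0$, one of them precisely because $q^{2(\alpha_i,\beta)}=1$. For (c) the argument is actually \emph{easier}, not harder: $q^{(\alpha_i,\alpha_i)}=1$ forces $q_i=\pm 1$, whence $\{n\}_i=0$ for all $n\geq 1$, so both outer sums collapse to their $n=0$ term and the identity becomes $\Delta(T''_{i,1}x)=(T''_{i,1}\otimes T''_{i,1})\Delta(x)$ with no further hypothesis on $x$ needed. Your proposal to ``insert Lusztig's explicit formula for $T''_{i,1}(x)$'' and expand $\Delta$ head-on does not get off the ground, since no closed formula for $T''_{i,1}$ on an arbitrary element is available; the Sec.~37.3 identity is exactly the substitute.
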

\begin{proof}
\begin{enumerate}[a)]
    \item This is a standard argument: Let the braiding of some elements
    $x_1,x_2$ be given by $x_1\otimes x_2\mapsto q_{12}x_2\otimes x_1$ and  
    $x_2\otimes x_1\mapsto q_{21}x_1\otimes x_2$. Then the assumption
    $q_{12}q_{21}=1$ implies for primitive elements $x_1,x_2$:
    \begin{align*}
      \Delta([x_1,x_2])
      &:=\Delta(x_1x_2-q_{12}x_2x_1)\\
      &=x_1x_2\otimes 1+x_1\otimes x_2+q_{12}x_2\otimes x_1+1\otimes x_1x_2\\
      &-q_{12}\left(x_2x_1\otimes 1+x_2\otimes x_1
      +q_{21}x_1\otimes x_2+1\otimes x_2x_1\right)\\
      &=[x_1,x_2]\otimes 1+1\otimes [x_1,x_2]
    \end{align*}
    Note this is the only case where $[x_1,x_2]=\pm [x_2,x_2]$ are linearly
    dependent. 
    \item In \cite{Lusz94} Sec. 37.3 a relation between reflection and
    comultiplication in $U_q^\L$ is given as follows:
    \begin{align*}
      (T_{i,-1}'\otimes T_{i,-1}')\Delta(T_{i,1}''x)
      &=\left(\sum_{n}q_i^{n(n-1)/2}\{n\}_iF_i^{(n)}\otimes E_i^{(n)}\right)
	\Delta(x)\\
      &\cdot \left(\sum_{n}(-1)^nq_i^{-n(n-1)/2}\{n\}_iF_i^{(n)}\otimes
	E_i^{(n)}\right)\\
      \mbox{where }\{n\}_{\alpha_i}
      &=\prod_{a=1}^n\left(q_i^{a}-q_i^{-a}\right)
      =(q_i-q_i^{-1})^n\cdot [n]_{q_i}!=(q_i-q_i^{-1})^n\cdot [n]_{q^{-1}_i}!\\
      T_{i,-1}'T_{i,1}''
      &=\id
    \end{align*}
    With the assumption $E_ix-q^{n(\alpha_i,\beta)}xE_i=0$ and
    $F_ix-q^{-n(\alpha_i,\beta)}xF_i=0$ we calculate
    \begin{align*}
    {n}_{q_i}E_i^{(n)}x{n'}_{q_i}E_i^{(n')}
    &=(q_i-q_i^{-1})^{n+n'}E_i^nxE_i^{n'}
    =(q_i-q_i^{-1})^{n+n'}q^{n(\alpha_i,\beta)}\cdot xE_i^{n+n'}\\
    {n}_{q_i}F_i^{(n)}x{n'}_{q_i}F_i^{(n')}
    &=(q_i-q_i^{-1})^{n+n'}F_i^nxF_i^{n'}
    =(q_i-q_i^{-1})^{n+n'}q^{-n(\alpha_i,\beta)}\cdot xF_i^{n+n'}
    \end{align*}
    With $\Delta(x)=x\otimes 1+1\otimes x$ we calculate
    \begin{align*}
      &(T_{i,-1}'\otimes T_{i,-1}')\Delta(T_{i,1}''x)\\
      &=\left(\sum_{n}q_i^{n(n-1)/2}\{n\}_iF_i^{(n)}\otimes E_i^{(n)}\right)
	(x\otimes 1+1\otimes x)\\
      &\cdot \left(\sum_{n'}(-1)^{n'}q_i^{-n'(n'-1)/2}\{n'\}_iF_i^{(n')}\otimes
	E_i^{(n')}\right)\\
      &=\sum_{n,n'=0}^{\ell_i}
	q_i^{n(n-1)/2-n'(n'-1)/2}(-1)^{n'}q^{n(\alpha_i,\beta)}
	\cdot F_i^{(n)}xF_i^{(n')}\otimes E_i^{(n)}E_i^{(n')}\\
      &+\sum_{n,n'}
	q_i^{n(n-1)/2-n'(n'-1)/2}(-1)^{n'}q^{n(\alpha_i,\beta)}
	\cdot F_i^{(n)}F_i^{(n')}\otimes E_i^{(n)}xE_i^{(n')}\\
      &=\sum_{n,n'}
	q_i^{n(n-1)/2-n'(n'-1)/2}(-1)^{n'}(q_i-q_i^{-1})^{n+n'}
	q^{2n(\alpha_i,\beta)}
	\cdot xF_i^{n+n'}\otimes E_i^{(n)}E_i^{(n')}\\
      &+\sum_{n,n'}
	q_i^{n(n-1)/2-n'(n'-1)/2}(-1)^{n'}(q_i-q_i^{-1})^{n+n'}
	\cdot F_i^{(n)}F_i^{(n')}\otimes x E_i^{n+n'}x\\
      &=\sum_{m=0}q_i^{-m^2}(q_i-q_i^{-1})^m
	\left(\sum_{n=0}^m(-1)^n\begin{bmatrix}m\\n\end{bmatrix}_{q_i}
	q^{2n(\alpha_i,\beta)}q^{n(m-1)}\right)
	\cdot xF_i^{m}\otimes E_i^{m}\\
      &+\sum_{m=0}q_i^{-m^2}(q_i-q_i^{-1})^m(-1)^m
	\left(\sum_{n=0}^m(-1)^n \begin{bmatrix}m\\n\end{bmatrix}_{q_i}
	q^{n(m-1)}\right)
	\cdot F_i^{m}\otimes xE_i^{m}
\end{align*}
where we substituted $m=n+n'$. For $m>0$ the second sum
vanishes by \cite{Lusz94} Sec. 1.3.4:
$$\sum_{n=0}^m(-1)^n\begin{bmatrix}m\\n\end{bmatrix}_{q_i}q^{n(m-1)}=0$$
The first sum vanishes for $m>0$ the same reason because of the additional
assumption $q^{2(\alpha_i,\beta)}=1$. Hence only the term $m=0$ remains and by
the inverse property $T_{i,-1}'T_{i,1}''=\id$ shows the assertion
\begin{align*}
      (T_{i,-1}'\otimes T_{i,-1}')\Delta(T_{i,1}''(x))
      &=x\otimes 1+1\otimes x\\
      \Delta(T_{i,1}''x)
      &=T_{i,1}''(x)\otimes 1+1\otimes T_{i,1}''(x)
\end{align*}
\item The assumption $q^{(\alpha_i,\alpha_i)}=1$ amounts to
$q_i=q^{(\alpha_i,\alpha_i)/2}=\pm 1$ hence $\{n\}_{\alpha_i}=0$ except
$\{0\}_{\alpha_i}=1$. Hence only the terms $n,n'=0$ in b) remain which shows
the assertion without using the assumption on the vanishing braided commutator.
\end{enumerate}
\end{proof}

\subsection{Primitives from the dual root system}
Let $\g$ be an affine Lie algebra with root system $\Delta$ of rank $n+1$ and
consider the dual root system $\Delta^\vee$. In Theorem \nref{thm_longAffine} we
have determined the type and a set $\Pi^t$ of simple roots
$\alpha_0^t,\ldots,\alpha_N^t$ (or empty) for the subsystem $(\Delta^\vee)^t$ of
roots with length divisible by $t$.\\
Suppose $q$ an $\ell$-th root of unity and let $s\in\{2,4,6,8\}$ denote the
longest root length in $\g$. We find below for each $\g,\ell$ a (not unique)
value $t$, such that $\ell \nmid a$ is equivalent to $t\mid \frac{2s}{a}$ for
all root lengths $a\neq 0$ (mostly $t=\ell$). This will allow us to characterize
roots with $q^{(\alpha,\alpha)}\neq 1$, i.e. $\ell\nmid (\alpha,\alpha)$, as
being dual to roots $\alpha^\vee\in (\Delta^\vee)^t$, i.e. $t\mid
(\alpha^\vee,\alpha^\vee)$.\\
More precisely we ultimately wish to prove the following:

\begin{lemma}\label{lm_primitive}
  Via reflection we define the following root vectors
  $E_{\alpha_i^{(0)}}$ for short roots $\alpha_i^{(0)}\in
  \left((\Pi^\vee)^{t}\right)^\vee$. These are primitive elements in
  $u_q^\L(\g)^+$ resp. skew-primitives in $u_q^\L(\g)$:
  \begin{center}
  \begin{tabular}{ll|lll|ll}
    $\g,\Delta$ & $\l$ & $\Delta^\vee$ & $t$ & $(\Delta^\vee)^t$ &
    $\left((\Delta^\vee)^t\right)^\vee$ & $\left((\Pi^\vee)^{t}\right)^\vee$ \\
    \hline
    \hline
    $B_n^{(1)}$ & $4$ & $A_{2n-1}^{(2)}$ & $4$ & $\left(A_1^{(1)}\right)^{\times
      n}$ & $\left(A_1^{(1)}\right)^{\times n}$ & $\alpha_n,
      \alpha_n+\alpha_{n-1}, \alpha_n+\alpha_{n-1}+\alpha_{n-2},\ldots$\\
    &&&&&& $\alpha_0',\alpha_0'+\alpha_{1},    
      \alpha_0'+\alpha_{1}+\alpha_{2},\ldots$\\
    &&&&&&
    $\quad\alpha_0':=\alpha_0+\alpha_2+\alpha_3+\cdots+\alpha_{n-1}+\alpha_n$\\
    $C_n^{(1)}$ & $4$ &  $D_{n+1}^{(2)}$ & $4$ & $D_n^{(1)}$ & $D_n^{(1)}$ &
      $\alpha_0',\alpha_1,\alpha_2,\cdots,\alpha_{n-1},\alpha_{n-1}+\alpha_n$\\
    &&&&&& $\quad\alpha_0':=\alpha_0+\alpha_1$\\
    $F_4^{(1)}$ & $4$ & $E_6^{(2)}$ & $4$ & $D_4^{(1)}$ & $D_4^{(1)}$ &
      $\alpha_0',\alpha_4,\alpha_3,\alpha_3+\alpha_2,
      \alpha_3+\alpha_2+\alpha_1$\\
    &&&&&& $\quad\alpha_0':=\alpha_0+\alpha_1+\alpha_2+\alpha_3$\\
    $G_2^{(1)}$ & $3,6$ & $D_4^{(3)}$ & $3,6$ & $A_2^{(1)}$ & $A_2^{(1)}$ &
    $\alpha_0',\alpha_2,\alpha_2+\alpha_1$\\
    &&&&&&$\quad\alpha_0':=\alpha_0+\alpha_1+\alpha_2$\\
    \hline
    $D_{n+1}^{(2)}$ & $4$ &  $C_n^{(1)}$ & $4$ &
      $\left(A_1^{(1)}\right)^{\times n}$ & $\left(A_1^{(1)}\right)^{\times n}$&
      $\alpha_n,\alpha_n+\alpha_{n-1},
      \alpha_n+\alpha_{n-1}+\alpha_{n-2},\cdots$\\
    &&&&&& $\alpha_0,\alpha_0+\alpha_{1},
      \alpha_0+\alpha_{1}+\alpha_{2},\ldots$\\
    $A_{2n-1}^{(2)}$ & $4$ & $B_n^{(1)}$ & $4$ & $D_n^{(1)}$  & $D_n^{(1)}$ &
      $\alpha_0,\alpha_1,\alpha_2,\cdots,\alpha_{n-1},\alpha_{n-1}+\alpha_n$\\
    $E_6^{(2)}$ & $4$ & $F_4^{(1)}$ & $4$ & $D_4^{(1)}$ & $D_4^{(1)}$ &
      $\alpha_0,\alpha_1,\alpha_2,\alpha_2+\alpha_3, 
      \alpha_2+\alpha_3+\alpha_4$\\
    $D_4^{(3)}$ & $3,6$ & $G_2^{(1)}$ & $3,6$ & $A_2^{(1)}$ & $A_2^{(1)}$ &
      $\alpha_0,\alpha_1,\alpha_1+\alpha_2$ \\
    \hline
    $A_{2}^{(2)}$ & $4,8$ & $A_{2}^{(2)}$ & 
      $4,8$ & $A_1^{(1)}$ & $A_1^{(1)}$ &
      $\alpha_1+\alpha_0,\alpha_0$ \\
    $A_{2n}^{(2)}$ & $4$ & $A_{2n}^{(2)}$ & $8$ & 
      $\left(A_1^{(1)}\right)^{\times n}$ & $\left(A_1^{(1)}\right)^{\times n}$ 
      & $\alpha_0,\alpha_0+\alpha_{1},   
      \alpha_0+\alpha_{1}+\alpha_{2},\ldots$\\
    &&&&&& $\alpha_n'',\alpha_n''+\alpha_{n-1},
      \alpha_n''+\alpha_{n-1}+\alpha_{n-2},\ldots$\\
  &&&&&&$\quad\alpha_n'':=\alpha_n'+\alpha_{n-2}+\alpha_{n-3}+\cdots+\alpha_{1}
      +\alpha_0$\\  
    &&&&&&$\quad \alpha_n':=\alpha_n+\alpha_{n-1}$\\
    $A_{2n}^{(2)}$ & $8$ & $A_{2n}^{(2)}$ & $4$ & $A_{2n-1}^{(2)}$ &
      $B_n^{(1)}$ &
      $\alpha_n',\alpha_{n-1},\ldots,\alpha_0$\\
    &&&&&&$\quad \alpha_n':=\alpha_n+\alpha_{n-1}$
  \end{tabular}
  \end{center}
  Note in the last block we have a nontrivial self-duality
  $\alpha_k\leftrightarrow \alpha_{n-k}$ as well as $t\neq \ell$.
\end{lemma}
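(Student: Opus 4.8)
The plan is to build each $E_{\alpha_i^{(0)}}$ as an iterated Lusztig reflection of a genuine simple-root generator and to propagate primitivity along the way by Lemma~\nref{lm_reflectionPrimitive}. First I would settle the purely combinatorial content of the table. The root-system columns $\Delta^\vee,(\Delta^\vee)^t,((\Delta^\vee)^t)^\vee$ together with the simple roots are nothing but Theorem~\nref{thm_longAffine} applied to the \emph{dual} affine root system $\Delta^\vee$ (whose type is recorded in Subsection~\nref{subsec_affine}), followed by dualization; the entries $((\Pi^\vee)^{t})^\vee$ arise by rewriting the resulting simple roots in the basis $\{\alpha_i\}$ of $\Delta$. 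The one genuinely new point is that the value $t$ chosen in each row really selects the roots with nontrivial self-braiding. With the dual form normalized as before the lemma, so that $(\alpha^\vee,\alpha^\vee)=\tfrac{2s}{(\alpha,\alpha)}$, this is the equivalence $\ell\nmid(\alpha,\alpha)\Leftrightarrow t\mid(\alpha^\vee,\alpha^\vee)$, which I would verify by running over the (at most three) root lengths $a\in\{2,4,6,8\}$ occurring in $\g$. This is a finite arithmetic check per row (for $B_n^{(1)},\ell=4$ one has $s=4$, $t=4$, and $\ell\nmid a\Leftrightarrow a=2\Leftrightarrow 4\mid\tfrac{8}{a}$), and it identifies the generators $\alpha_i^{(0)}$ with the short roots $((\Delta^\vee)^t)^\vee$.

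For the construction itself, note that in every degenerate row the simple roots $\alpha_i\in\Pi$ split into the \emph{degenerate} ones with $\ell\mid(\alpha_i,\alpha_i)$, i.e.\ $q^{(\alpha_i,\alpha_i)}=1$, and the non-degenerate short ones with $\ell_{\alpha_i}\neq1$, for which $E_{\alpha_i}$ is already a primitive generator of $u_q^\L(\g)^+$. The crucial structural input is the minimality clause of Theorem~\nref{thm_longAffine}: each listed $\alpha_i^{(0)}$ equals a single non-degenerate simple root plus a sum of degenerate simple roots, each occurring with coefficient one. I would therefore exhibit, for every $\alpha_i^{(0)}$, a chain $\beta_0=\alpha_j$, $\beta_{r+1}=\beta_r+\alpha_{k_r}=s_{k_r}(\beta_r)$ that adds one degenerate simple root $\alpha_{k_r}$ at a time and terminates at $\alpha_i^{(0)}$. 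That each step really is the reflection $s_{k_r}$ amounts to $\langle\beta_r,\alpha_{k_r}^\vee\rangle=-1$, which follows from the same inner-product computations already performed (in the dual) inside the proof of Theorem~\nref{thm_longAffine}; the two towers attached to the affine node (the $\alpha_0'$-families) are reached identically, using a longer word that first climbs the finite part and then crosses $\alpha_0$.

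Primitivity is then immediate by induction. Setting $E_{\alpha_i^{(0)}}:=T_{k_{m-1},1}''\cdots T_{k_0,1}''(E_{\alpha_j})$, every reflection in the word is taken through a degenerate simple root $\alpha_{k_r}$ with trivial self-braiding $q^{(\alpha_{k_r},\alpha_{k_r})}=1$, so Lemma~\nref{lm_reflectionPrimitive}(c) applies verbatim at each step and carries primitive elements to primitive elements with no side conditions. Since the $T_{k,1}''$ are Lusztig's algebra automorphisms and shift the $\N^\Pi$-degree by $s_{k}$, the elements $E_{\alpha_i^{(0)}}$ are nonzero and homogeneous of exactly the asserted degree. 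The statement for the full Hopf algebra $u_q^\L(\g)$ then follows formally: $u_q^\L(\g)^+$ sits in $u_q^\L(\g)$ as the braided Hopf algebra of its bosonization over the root lattice, under which braided-primitive elements become skew-primitive, so each $E_{\alpha_i^{(0)}}$ is skew-primitive there.

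I expect the main obstacle to be verifying that one can always route the reflection word through degenerate roots only. In principle a target $\alpha_i^{(0)}$ might force passing through a non-degenerate short root, in which case part~(c) is unavailable and one would have to fall back on Lemma~\nref{lm_reflectionPrimitive}(b), checking that the braided commutators $[E_{\alpha_{k_r}},\beta_r]$ and $[F_{\alpha_{k_r}},\beta_r]$ vanish. The minimality of $\Pi^t$ is exactly what I would invoke to exclude this: since each $\alpha_i^{(0)}$ carries only one non-degenerate simple root and only with multiplicity one, a monotone chain that adds degenerate roots one at a time never reflects in the non-degenerate direction, and every intermediate $\beta_r$ stays a genuine short root of $\g$. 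Making this routing explicit and uniform across the blocks $a_0k=1$, $a_0k=2,3$, and especially the self-dual $A_2^{(2)},A_{2n}^{(2)}$ (where $t\neq\ell$ and the very-short/short bookkeeping is most delicate) is where the real case-by-case work lies.
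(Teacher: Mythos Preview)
Your proposal is correct and follows essentially the same route as the paper: dualize Theorem~\nref{thm_longAffine} to obtain the table, verify via a root-length arithmetic check (the paper isolates this as a separate lemma) that the chosen $t$ singles out exactly the roots with $q^{(\alpha,\alpha)}\neq 1$, then use the minimality clause of Theorem~\nref{thm_longAffine} to conclude that each $\alpha_i^{(0)}$ contains a unique non-degenerate simple root with multiplicity one and hence can be reached from that simple root by reflections through degenerate $\alpha_j$ only, so Lemma~\nref{lm_reflectionPrimitive}(c) propagates primitivity. Your closing concern about whether the reflection word can always be routed purely through degenerate simple roots is well placed; the paper likewise does not spell this out beyond invoking the multiplicity-one property, and in practice it holds because the listed $\alpha_i^{(0)}$ are visibly monotone chains of simple roots.
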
~
\begin{remark}~
\begin{itemize}
  \item For finite root systems the lemma has been proven
  case-by-case as part of the proof of Thm 5.4 in \cite{Len14c}. The proof
  strategy via reflections is new.
  \item There is no coherent definition of affine root vectors available. Here
we  a-priori take some reflection to yield some $E_\alpha$, but it  will become 
clear in the proof that up to a sign our specific $E_\alpha$ are  independent 
of the choice of the reflection.
  \item We do not claim these are all primitives. This will depend on the
  Nichols algebra structure and fail precisely in the so-called exotic cases.
  \item The theorem gives only information about real roots. Imaginary roots
  are by definition included in any $(\Delta^\vee)^t$, but never fulfill
  $q^{(\delta,\delta)}\neq 1$. Whether they are included in $u_q(\g)$ will
  depend on the Nichols algebra structure and will be unexpected for the
  exotic cases.
\end{itemize}
\end{remark}
\begin{proof}
First we check that the conditions on the roots $\alpha^\vee\in(\Delta^\vee)^t$
defined in Lemma \nref{lm_DeltaT} for the dual root system $\Delta^\vee$ matches
precisely $q^{(\alpha,\alpha)}\neq 1$ i.e. $E_\alpha$ should be in $u_q^\L$ for
$q$ a primitive $\ell$-th root of unity:

\begin{lemma}\label{lm_lt}
  Let $s\in\{2,4,6,8\}$ denote the longest root length in $\g$. Consider the
  map 
  \begin{align*}
  f:\Delta^\vee
  &\to \Delta\\
  \alpha'
  &\mapsto \sqrt{\frac{s}{2}}(\alpha')^\vee,
  \qquad (\alpha')^\vee=\frac{(\alpha',\alpha')}{2}\alpha'
  \end{align*}
  Then $f$ maps $(\Delta^\vee)^t$  bijectively to the set of roots
  $\alpha\in\Delta$ fulfilling $q^{(\alpha,\alpha)}\neq 1$ for the pairs
  $\ell,t$ in Lemma \nref{lm_primitive}. Moreover, for $\ell=1,2$ no roots
  fulfill  $q^{(\alpha,\alpha)}\neq 1$, hence $t=\infty$ would be appropriate
  and for other $\ell$ all roots fulfill  $q^{(\alpha,\alpha)}\neq 1$,
  hence $t=1$ would be appropriate.
\end{lemma}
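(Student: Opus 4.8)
The plan is to reduce the asserted geometric bijection to a purely arithmetic comparison of root lengths, carried out type by type. First I would record the length spectrum of each $\g$: by the explicit description of $\Delta^{re}$ in Subsection \ref{subsec_affine}, every real root has one of the even lengths occurring in $\bar\Delta$ (rescaled in the $a_0k=4$ cases), namely $\{2,4\}$ in general, $\{2,6\}$ for $G_2^{(1)},D_4^{(3)}$, $\{2,8\}$ for $A_2^{(2)}$, and $\{2,4,8\}$ for $A_{2n}^{(2)}$, so that $s$ is the largest of these. Since $q$ is a primitive $\ell$-th root of unity, $q^{(\alpha,\alpha)}=q^a\neq 1$ is literally the condition $\ell\nmid a$, so the target set $\{\alpha\in\Delta^{re} : q^{(\alpha,\alpha)}\neq 1\}$ depends on a root only through the predicate $\ell\nmid a$ applied to its length $a$.

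The core step is the arithmetic dictionary: for each pair $(\ell,t)$ listed in Lemma \ref{lm_primitive} I would verify, over the finite length-spectrum of the corresponding $\g$, the equivalence $\ell\nmid a \Leftrightarrow t\mid \tfrac{2s}{a}$. This is a short case check; for instance when $s=4$ and $\ell=t=4$ one has $\tfrac{2s}{a}=4$ for $a=2$ (so $\ell\nmid a$ and $t\mid \tfrac{2s}{a}$ both hold) and $\tfrac{2s}{a}=2$ for $a=4$ (both fail), which isolates exactly the short roots; the $G_2$-types with $s=6$ and $\ell,t\in\{3,6\}$, and the $A_2^{(2)}$ row with $s=8$, are handled identically. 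I expect the only genuinely delicate rows to be those of $A_{2n}^{(2)}$, where three lengths $\{2,4,8\}$ occur, where $t\neq\ell$ (namely $(\ell,t)=(4,8)$ and $(8,4)$), and where the self-duality acts by the nontrivial permutation $\alpha_k\leftrightarrow\alpha_{n-k}$; here the shortcut $t=\ell$ is unavailable and the divisibilities must be computed honestly. This bookkeeping is the main obstacle, though it remains elementary.

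It then remains to translate the dictionary into the bijection via $f$. The duality $\alpha\mapsto\alpha^\vee=\tfrac{2\alpha}{(\alpha,\alpha)}$ is a bijection of real roots with $(\alpha^\vee,\alpha^\vee)=\tfrac{4}{a}$ in the ambient form; renormalizing $\Delta^\vee$ by the factor $s/2$, so that its short roots again have length $2$, turns this into $\tfrac{2s}{a}$. Hence $\alpha^\vee\in(\Delta^\vee)^t$, i.e.\ $t\mid(\alpha^\vee,\alpha^\vee)=\tfrac{2s}{a}$, is exactly the condition matched to $\ell\nmid a$ above, and the systems $(\Delta^\vee)^t$ involved were already determined in Theorem \ref{thm_longAffine}. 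The map $f$ is the inverse of this renormalized duality together with the scaling $\sqrt{s/2}$ restoring the normalization of $\Delta$, so $f$ carries $(\Delta^\vee)^t$ onto $\{\alpha\in\Delta^{re} : q^{(\alpha,\alpha)}\neq 1\}$ bijectively. I would note that $f$ annihilates the isotropic roots (as $(\delta)^\vee=0$), so the bijection is understood on real roots, consistently with the convention of Lemma \ref{lm_DeltaT} that isotropic roots lie in every $(\Delta^\vee)^t$ while never satisfying $q^{(\delta,\delta)}\neq 1$. The two extremes are then immediate: for $\ell=1,2$ all lengths $a$ are even, so $q^a=1$ throughout and the target set is empty (formally $t=\infty$), whereas in the remaining generic situations every real root satisfies $q^{(\alpha,\alpha)}\neq 1$ and $(\Delta^\vee)^t=\Delta^\vee$ with $t=1$.
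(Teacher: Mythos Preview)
Your proposal is correct and follows essentially the same route as the paper: both compute that $(f(\alpha'),f(\alpha'))=\tfrac{2s}{(\alpha',\alpha')}$, reducing the bijection to the arithmetic equivalence $\ell\nmid a\Leftrightarrow t\mid\tfrac{2s}{a}$ on the finite length spectrum, and then verify this equivalence case by case over the possible values of $s$ and $\ell$. Your identification of the $A_{2n}^{(2)}$ rows (three lengths, $t\neq\ell$) as the only genuinely nontrivial bookkeeping matches the paper's treatment exactly.
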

\begin{proof}
  By definition the condition $q^{(\alpha,\alpha)}=1$ amounts to
  $\ell\mid(\alpha,\alpha)$. Let $(\alpha',\alpha')=:a$, then we calculate
  \begin{align*}
      (f(\alpha'),f(\alpha'))
      &=\frac{s}{2}\cdot \left(\frac{2}{(\alpha',\alpha')}\right)^2\cdot
	(\alpha',\alpha')
      =\frac{2s}{(\alpha',\alpha')}
  \end{align*}
  Hence $\ell\mid(\alpha,\alpha)$ is equivalent to
  $\ell\mid\frac{2s}{(\alpha',\alpha')}$. We check all cases:
  \begin{itemize}
   \item For $\ell=1,2$ all $\ell\mid(\alpha,\alpha)$ and we will excluding this
    trivial cases in the following.
   \item For $s=4$ and $\ell=4$ we have $\ell\mid(\alpha,\alpha)$ for long roots
    $\alpha$ and thus dually 
    $\ell\mid\frac{2s}{(\alpha',\alpha')}=\frac{8}{(\alpha',\alpha')}$
    for short roots $\alpha'$. On the other hands, short roots are characterized
    by being not divisible by $t=4$.
  \item For $s=6$ and $\ell=3,6$ we have $\ell\mid(\alpha,\alpha)$ for long
roots
    $\alpha$ and thus dually 
    $\ell\mid\frac{2s}{(\alpha',\alpha')}=\frac{12}{(\alpha',\alpha')}$
    for short roots $\alpha'$. On the other hands, short roots are characterized
    by being not divisible by $t=3,6$.
  \item For $s=8$, i.e. $(\alpha,\alpha)=2,4,8$ we have two cases $\ell=4$ and
    $\ell=8$. For $\ell=4$ we have $\ell\mid(\alpha,\alpha)$ for all but very
    short roots $\alpha$ and thus dually 
    $\ell\mid\frac{2s}{(\alpha',\alpha')}=\frac{16}{(\alpha',\alpha')}$
    for all but very long roots $\alpha'$. On the other hands, such roots are
    characterized by being not divisible by $t=8$. For $\ell=8$ we have
    $\ell\mid(\alpha,\alpha)$ for very long roots $\alpha$ and thus dually 
    $\ell\mid\frac{2s}{(\alpha',\alpha')}=\frac{16}{(\alpha',\alpha')}$
    for very short roots $\alpha'$. On the other hands, such roots are
    characterized by being not divisible by $t=4$.
  \item This exhausts all $\ell$ dividing root lengths for affine Lie algebras.
    For all other values of $\ell$ we have the generic case $\ell\nmid
    (\alpha,\alpha)$ for all real roots $\alpha$.
  \end{itemize}
\end{proof}

We now conclude the proof of Lemma \nref{lm_primitive}:\\

The table in the statement is taken by dualizing the table in Theorem
  \nref{thm_longAffine} hence by Lemma \nref{lm_lt} the roots
  $\alpha_i^{(0)}\in\left((\Pi^\vee)^{t}\right)^\vee \subset
  \left((\Delta^\vee)^{t}\right)^\vee$ in the statement fulfill
  $q^{(\alpha_i^{(0)},\alpha_i^{(0)})}=1$. We wish to show that (in contrast to
  other roots $\alpha\in\left((\Delta^\vee)^{t}\right)^\vee$) the
  $\alpha_i^{(0)}$ give rise to primitive elements Now Lemma
  \nref{lm_reflectionPrimitive} that any reflection $T_{j,1}''(x)$ of a
  primitive element $x$ (especially $x=E_{\alpha_i}$) on simple roots $\alpha_j$
  with $q^{(\alpha_j,\alpha_j)}=1$. The last condition characterizes the
  $\alpha_j\not\in \left((\Delta^\vee)^{t}\right)^\vee$ or dually
  $\alpha_j^\vee\not\in (\Delta^\vee)^{t}$. We now check that the choices
  $\alpha_i^{(0)},\alpha_i^{(0)\;\vee}$ in the statement have all the property
  that they contain only a unique simple root $\alpha_k^\vee\in
  (\Delta^\vee)^{t}$ and only with multiplicity one (this has already been
  noticed in Theorem \nref{thm_longAffine}). The
  $\alpha_i^{(0)},\alpha_i^{(0)\;\vee}$ can hence be obtained by
  iterated reflection of $E_{\alpha_k}$ only on simple roots $\alpha_j\not\in
  \left((\Delta^\vee)^{t}\right)^\vee$ and hence $E_{\alpha_i^{(0)}}$
  is primitive as asserted.
\end{proof}

\section{Exotic cases}\label{sec_exotic}

We have established in Lemma \nref{lm_primitive} a set of primitive elements
$E_{\alpha_i^{(0)}}\in U_q^{\L}(\g)^+$ associated to the dual of the simple
roots in a subsystem $(\Delta^\vee)^{t}\subset \Delta$. However, it
is neither clear that these are all primitive elements nor that they indeed
generate an affine quantum group of type $\left((\Delta^\vee)^{t}\right)^\vee$.

\begin{example}\label{exm_G2}
In \cite{Len14c} the author has already determined for finite root systems the
exotic case $u_{\sqrt{-1}}^\L(G_2)$ which contains all root vectors but is not
generated by $E_{\alpha_1},E_{\alpha_2}$. Rather, these primitive elements only
generate a quantum group of type $A_2$ and $E_{\alpha_{112}}$ is a new
primitive generator. Altogether we found $u_{\sqrt{-1}}^\L(G_2)^+\cong
u_{\sqrt{-1}}^\L(A_3)^+$.
\end{example}

In this case we were explicitly checking the braiding matrix for
$E_{\alpha_1},E_{\alpha_2}$ against Heckenberger's classification of
finite-dimensional Nichols algebras \cite{Heck09}.\\

We now turn to the potentially exotic cases in Lemma \nref{lm_cases}
which do not involve odd cycles:
\begin{center}
\begin{tabular}{cc}
      $\g,\Delta$ & $\ell$ \\
      \hline\hline
      $A^{(2)}_{2},A^{(2)}_{2n}$ & $3,6$\\
      $G_2^{(1)},D_4^{(3)}$ & $4$\\
\end{tabular}
\end{center}
These were all cases which were not degenerate in the sense that 
$\alpha_i^{(0)}=\alpha_i$. We compute in each case the braiding matrix of the
braided vector space $M$ spanned by the $E_{\alpha_i}$ as we did for the finite
root systems in \cite{Len14c} Thm 5.4 and determine the Cartan matrix for the
Nichols algebra $\B(M)$ using \cite{Heck06} Sec. 3. Mostly we find that $\B(M)$
is significantly smaller than $u_q^\L$ (in terms of real roots) and we thus
successively add new primitive elements (some using Lemma
\nref{lm_reflectionPrimitive}, some by guess-and-check) until we account for
all roots $\alpha$ with $\ell_\alpha\neq 1$. We treat the cases in order of
increasing difficulty:\\

\subsection{Case 
\texorpdfstring{$G_2^{(1)}$ at $\ell=4$}{G2(1 at l=4)}
}~\\

For $\g=G_2^{(1)},\ell=4$ the braiding matrix is
  $$\begin{pmatrix}
    q^6 & q^{-3} & 1\\
    q^{-3} & q^6 & q^{-3} \\
    1 & q^{-3} & q^2  
  \end{pmatrix}
  =\begin{pmatrix}
    \bar{q}^2 & \bar{q}^{-1} & 1\\
    \bar{q}^{-1} & \bar{q}^2 & \bar{q}^{-1}\\
    1 & \bar{q}^{-1} & \bar{q}^2 
  \end{pmatrix}$$
which is the standard braiding matrix $q_{ij}=q'^{(\alpha_i,\alpha_j)}$ for
$A_3,q'$ with $q':=\bar{q}$. Especially (as in all exotic cases) the
$E_{\alpha_i}$ do not generate the expected root system $G_2^{(1)}$.\\

We wish to determine more primitives: We have already seen explicitly in
\cite{Len14c} Thm. 5.4 that for $\g=G_2,\ell=4$ (with indices $1,2$ switched)
the element $$E_{221}:=-q^2(E_1E_2^{(2)}-q^{-6}E_2^{(2)}E_1)-qE_{12}E_2$$
is a primitive and not in the subalgebra generated by
$E_{\alpha_1},E_{\alpha_2}$. The new braided vector space spanned by
$E_{\alpha_0},E_{\alpha_1},E_{\alpha_2},E_{\alpha_1+2\alpha_2}$ then the new
extended braiding matrix is easily calculated:
\begin{align*}
  q^{(\alpha_0,\alpha_1+2\alpha_2)}
  &=q^{-3}=q\\
  q^{(\alpha_1,\alpha_1+2\alpha_2)}
  &=q^{6-6}=+1\\
  q^{(\alpha_2,\alpha_1+2\alpha_2)}
  &=q^{-3+4}=q\\
  q^{(\alpha_1+2\alpha_2,\alpha_1+2\alpha_2)}
  &=q^{6-24+8}=q^2
\end{align*}
$$\begin{pmatrix}
    \bar{q}^2 & \bar{q}^{-1} & 1 & \bar{q}^{-1}\\
    \bar{q}^{-1} & \bar{q}^2 & \bar{q}^{-1} & 1\\
    1 & \bar{q}^{-1} & \bar{q}^2 & \bar{q}^{-1}\\
    \bar{q}^{-1} & 1 &  \bar{q}^{-1} & \bar{q}^2
\end{pmatrix}$$
This is a standard braiding matrix $q_{ij}=q'^{(\alpha_i,\alpha_j)}$ for
the affine Lie algebra $A_3^{(1)},q'$ with $q':=\bar{q}$. Especially the $6$
roots generated by $E_{\alpha_1},E_{\alpha_2},E_{\alpha_1+2\alpha_2}$ account
for all roots in $G_2$. We furthermore check that indeed under this
correspondence
\begin{align*}
  \delta_{A_3^{(1)}}
  &=\alpha_0+\theta_{A_3}\\
  &=\alpha_0+\alpha_1+\alpha_2+(\alpha_1+2\alpha_2)\\
  &=\alpha_0+2\alpha_1+3\alpha_2\\
  &=\alpha_0+\theta_{G_2}=\delta_{G_2^{(1)}}
\end{align*}
This also shows that the isotropic roots in $G_2^{(1)},A_3^{(1)}$ coincide.
So the primitive elements
$E_{\alpha_0},E_{\alpha_1},E_{\alpha_2},E_{\alpha_1+2\alpha_2}$ generate an
affine quantum group of type $A_3^{(1)}$ which contains all roots of
$G_2^{(1)}$.\\

\subsection{Case
\texorpdfstring{$A_2^{(2)}$ at $\ell=3,6$}{A2(2) at l=3,6}
}~\\

For $\g=A^{(2)}_{2}$ the braiding matrix is for $\ell=3$
  $$\begin{pmatrix}
    q^2 & q^{-4}\\
    q^{-4} & q^8
  \end{pmatrix}
  =\begin{pmatrix}
    q^2 & q^{-1}\\
    q^{-1} & q^2
  \end{pmatrix}$$
respectively for $\ell=6$
  $$\begin{pmatrix}
    q^2 & q^{-4}\\
    q^{-4} & q^8
  \end{pmatrix}
  =\begin{pmatrix}
    (-q)^2 & (-q)^{-1}\\
    (-q)^{-1} & (-q)^2
  \end{pmatrix}$$
The braiding matrix for $\ell=3$ is the standard braiding matrix
$q_{ij}=q^{(\alpha_i,\alpha_j)}$ for $A_2,q$. For $\ell=6$ the braiding matrix
is the standard braiding matrix $q_{ij}=q'^{(\alpha_i,\alpha_j)}$ for
$A_2,q'$ with $q':=-q$ again of order $3$.\\
In both cases this does not generated the root system $A_2^{(2)}$.\\

We wish to determine more primitives and treat both cases simultaneous using
$\epsilon:=q^3=\pm 1$.Similar to the $G_2^{(1)}$-case we find a primitive
element $E_{0001}:=E_0^{(3)}E_1-E_1E_0^{(3)}+\cdots$ in degree
$3\alpha_0+\alpha_1$, which cannot be obtained by commutators of $E_0,E_1$. \\
The new braided vector space spanned by
$E_{\alpha_0},E_{\alpha_1},E_{3\alpha_0+\alpha_1}$, then the new
extended braiding matrices for $\ell=3$ resp. $\ell=6$ are easily calculated:
\begin{align*}
  q^{(\alpha_0,3\alpha_0+\alpha_1)}
  &=q^{6-4}=q^2=q^{-1}\mbox{ resp. }(-q)^{-1}\\
  q^{(\alpha_1,3\alpha_0+\alpha_1)}
  &=q^{-12+8}=q^{-4}=q^{-1}\mbox{ resp. }(-q)^{-1} \\
  q^{(3\alpha_0+\alpha_1,3\alpha_0+\alpha_1)}
  &=q^{18-24+8}=q^2
\end{align*}
  $$\begin{pmatrix}
    q^2 & q^{-1} & q^{-1}\\
    q^{-1} & q^2 & q^{-1}\\
    q^{-1} & q^{-1} & q^2 
  \end{pmatrix}\qquad 
  \begin{pmatrix}
    (-q)^2 & (-q)^{-1} & (-q)^{-1} \\
    (-q)^{-1} & (-q)^2 & (-q)^{-1} \\
    (-q)^{-1} & (-q)^{-1} & (-q)^2 
  \end{pmatrix}$$
These are the standard braiding matrices $q_{ij}=q'^{(\alpha_i,\alpha_j)}$ of
type $A_2^{(1)}$ with $q':=q$ for $\ell=3$ resp. $q'=-q$ for $\ell=6$.\\

We now want to convince ourselves that the affine quantum group of type
$A_2^{(1)}$ generated by the three primitives
$E_{\alpha_0},E_{\alpha_1},E_{3\alpha_0+\alpha_1}$ in $A_2^{(2)}$ does indeed
generate the full root system of $\Delta=A_2^{(2)}$, which is by Section
\nref{subsec_affine}
$$\Delta^{re}= \left(\{\pm\alpha_1\}+2\delta_{A_2^{(2)}}\Z\right)
  \;\cup\;
\left(\frac{1}{2}(\{\pm\alpha_1\}+\delta_{A_2^{(2)}})+\delta_{A_2^{(2)}}
\Z\right)$$
We first compare $\delta_{A_2^{(2)}},\delta_{A_2^{(1)}}$ under this
correspondence:
\begin{align*}
  \delta_{A_2^{(1)}}
  &=\alpha_0+\theta_{A_2}\\
  &=\alpha_0+\alpha_1+(3\alpha_0+\alpha_1)\\
  &=2(2\alpha_0+\alpha_1)
  =2\delta_{A_2^{(2)}}
\end{align*}
The verify that the real roots are in bijection it hence suffices to find
matching fundamental domains of the action $+2\delta_{A_2^{(2)}}$ on both root
systems. The following works:
\begin{align*}
  &\{\pm\alpha_1\}
  \;\cup\; \left(\frac{1}{2}(\{\pm\alpha_1\}+\delta_{A_2^{(2)}})
  +\delta_{A_2^{(2)}}\right)
  \;\cup\; \left(\frac{1}{2}(\{\pm\alpha_1\}+\delta_{A_2^{(2)}})
  -2\delta_{A_2^{(2)}}\right)\\
  &=\{\pm\alpha_1\}\;\cup\;\{\alpha_{00011},\alpha_{0001}\}
  \;\cup\;\{-\alpha_{0001},-\alpha_{00011}\}\\
  &=\{\pm\alpha_1,\pm\alpha_{0001},\pm\alpha_{00011} \} 
\end{align*}
As a side remark, note this gets significantly nicer if one
rotates $A_2^{(1)}$ such that $\alpha_0'=\alpha_{0001}$.
Hence the real roots of $A_2^{(1)}$ and $A_2^{(2)}$ coincide under our
correspondence.\\

Note however this \emph{fails} for the isotropic roots: In
$A_2^{(2)}$ we have isotropic roots $\delta_{A_2^{(2)}}$ with 
multiplicities $1$, while in $A_2^{(1)}$ we have the 
isotropic roots $\delta_{A_2^{(1)}}=2m\delta_{A_2^{(2)}}$ with multiplicity
$2$. The author does not have an explanation for this.\\

\subsection{Case
\texorpdfstring{$D_4^{(3)}$ at $\ell=4$}{D4(3) at l=4}
}~\\

For $\g=D_4^{(3)},\ell=4$ the braiding matrix is
  $$\begin{pmatrix}
    q^2 & q^{-1} & 1\\
    q^{-1} & q^2 & q^{-3} \\
    1 & q^{-3} & q^6  
  \end{pmatrix}
  =\begin{pmatrix}
    q^2 & q^{-1} & 1\\
    q^{-1} & q^2 & \bar{q}^{-1}\\
    1 & \bar{q}^{-1} & q^2 
  \end{pmatrix}$$
This is not a braiding matrix of the form $q_{ij}=q'^{(\alpha_i,\alpha_j)}$, but
it is nevertheless of type $A_3$ by \cite{Heck06}
with $q_{ii}=q^2=-1=(q_{ij}q_{ji})^{-1}$ for $i,j$ adjacent. It could be
rewritten as a Doi twist
of $A_3,q$ or $A_3,\bar{q}$. Especially (as in all exotic cases) the
$E_{\alpha_i}$ do not generate the expected root system $D_4^{(3)}$.\\

We wish to determine more primitives: We have already seen explicitly in
\cite{Len14c} Thm. 5.4 that for $\g=G_2,\ell=4$
the element $$E_{112}:=-q^2(E_2E_1^{(2)}-q^{-6}E_1^{(2)}E_2)-qE_{12}E_1$$
is a primitive and not in the subalgebra generated by
$E_{\alpha_1},E_{\alpha_2}$. The new braided vector space spanned by
$E_{\alpha_0},E_{\alpha_1},E_{\alpha_2},E_{2\alpha_1+\alpha_2}$ then the new
extended braiding matrix is easily calculated:
\begin{align*}
  q^{(\alpha_0,2\alpha_1+\alpha_2)}
  &=q^{-2}=-1\\
  q^{(\alpha_1,2\alpha_1+\alpha_2)}
  &=q^{4-3}=q\\
  q^{(\alpha_2,2\alpha_1+\alpha_2)}
  &=q^{-6+6}=1\\
  q^{(2\alpha_1+\alpha_2,2\alpha_1+\alpha_2)}
  &=q^{8-12+6}=q^2
\end{align*}
$$\begin{pmatrix}
    q^2 & q^{-1} & 1 & -1 \\
    q^{-1} & q^2 & \bar{q}^{-1} & \bar{q}^{-1}\\
    1 & \bar{q}^{-1} & q^2 & 1 \\
    -1 & \bar{q}^{-1} & 1 & q^2
  \end{pmatrix}$$
This is not a braiding matrix of the form $q_{ij}=q'^{(\alpha_i,\alpha_j)}$, but
it is nevertheless of type $D_4$ with center node $\alpha_1$ by \cite{Heck06}
with $q_{ii}=q^2=-1=(q_{ij}q_{ji})^{-1}$ for $i,j$ adjacent (note especially
now two non-adjacent nodes $\alpha_0,2\alpha_{1}+\alpha_2$ anticommute). It
could be rewritten as a Doi twist of $D_4,q$ or $D_4,\bar{q}$. Especially still
the $E_{\alpha_0},E_{\alpha_1},E_{\alpha_2},E_{2\alpha_1+\alpha_2}$ do not
generate the expected root system $D_4^{(3)}$.\\

We wish to determine more primitives: We observe that $E_0,E_{112}$ anticommute 
in the $D_4$ subalgebra established above even though
$(\alpha_0,\alpha_{112})=-2$ in the root system $D_4^{(3)}$ (this is a typical
effect in exotic cases). We get hence from lemma \nref{lm_reflectionPrimitive}
that the reflection of $E_{112}$ on $\alpha_0$ is a primitive element in degree
$2\alpha_0+2\alpha_1+\alpha_2$. The new braided vector space spanned by
$E_{\alpha_0},E_{\alpha_1},E_{\alpha_2},E_{2\alpha_1+\alpha_2},E_{
2\alpha_0+2\alpha_1+\alpha_2}$, then the new
extended braiding matrix is easily calculated:
\begin{align*}
  q^{(\alpha_0,2\alpha_0+2\alpha_1+\alpha_2)}
  &=q^{4-2}=q^2=-1\\
  q^{(\alpha_1,2\alpha_0+2\alpha_1+\alpha_2)}
  &=q^{-2+4-3}=q^{-1}\\
  q^{(\alpha_2,2\alpha_0+2\alpha_1+\alpha_2)}
  &=q^{-6+6}=1\\
  q^{(2\alpha_1+\alpha_2,2\alpha_0+2\alpha_1+\alpha_2)}
  &=q^{-4+4}=1\\
  q^{(2\alpha_0+2\alpha_1+\alpha_2,2\alpha_0+2\alpha_1+\alpha_2)}
  &=q^{8-8+8-12+6}=q^2
\end{align*}
$$\begin{pmatrix}
    q^2 & q^{-1} & 1 & -1 & -1\\
    q^{-1} & q^2 & \bar{q}^{-1} & \bar{q}^{-1} & q^{-1}\\
    1 & \bar{q}^{-1} & q^2 & 1 & 1\\
    -1 & \bar{q}^{-1} & 1 & q^2 & 1\\
    -1 & q^{-1} & 1 & 1 & q^2
  \end{pmatrix}$$
This is not a braiding matrix of the form $q_{ij}=q'^{(\alpha_i,\alpha_j)}$, but
it is nevertheless of type $D_4^{(1)}$ with center node $\alpha_1$ by
\cite{Heck06} with $q_{ii}=q^2=-1=(q_{ij}q_{ji})^{-1}$ for $i,j$ adjacent.\\

We now want to convince ourselves that the affine quantum group of type
$D_4^{(1)}$ generated by the four primitives
$E_{\alpha_0},E_{\alpha_1},E_{\alpha_2},E_{2\alpha_1+\alpha_2},E_{
2\alpha_0+2\alpha_1+\alpha_2}$ in $D_4^{(3)}$ does indeed generate the full
root system of $\Delta=D_4^{(3)}$, which is by Section \nref{subsec_affine}
$$\Delta^{re}=\left(\bar{\Delta}^{short}+\delta\Z\right)
  \;\cup\; \left(\bar{\Delta}^{long}+\delta k\Z\right)$$
With $\bar{\Delta}=G_2$ and
$\delta_{D_4^{(3)}}=\alpha_0+\theta^{short}_{G_2}=\alpha_{0112}$ this is
explicitly 
$$\Delta^{re}=\left(\left\{\pm
\alpha_1,\pm\alpha_{12},\pm\alpha_{112}\right\}+\alpha_{0112}\Z
\right)\;\cup\;
\left(\left\{\alpha_2,\pm\alpha_{1112},\pm\alpha_{11122}\right\}
+3\alpha_{0112}\Z\right)$$
On the other hand we have for the root system $\Delta'=D_4^{(1)}$ that
$\bar{\Delta}'=D_4$ and under the correspondence 
\begin{align*}
  \delta_{D_4^{(1)}}
  &=\alpha_0'+\theta^{long}_{D_4}\\
  &=\alpha_0'+(2\alpha_1'+\alpha_2'+\alpha_3'+\alpha_4')\\
  &=\alpha_0+(2\alpha_1+\alpha_2+\alpha_{112}+\alpha_{00112})\\
  &=3\alpha_0+6\alpha_1+3\alpha_2
  =3\delta_{D_4^{(3)}}
\end{align*}
We hence have to convince ourselves whether some fundamental domain for the
action $+3\delta_{D_4^{(3)}}$ coincides, say
\begin{align*}
&\bar{\Delta}^{short}
\;\cup\;\left(\bar{\Delta}^{short}+\delta_{D_4^{(3)}}\right)
\;\cup\;\left(\bar{\Delta}^{short}+2\delta_{D_4^{(3)}}\right)
\;\cup\;\bar{\Delta}^{long}
\;\stackrel{?}{=}\;\bar{\Delta}'
\end{align*}
This is not completely true, but a slightly more complicated fundamental domain
on the right-hand side suffices: As in the
finite $G_2$ example \nref{exm_G2}, the roots in $\bar{\Delta}=G_2$ are in
bijection with the $A_3$ subsystem in $\bar{\Delta}'=D_4$ generated by
$\alpha_1,\alpha_2,\alpha_{112}$. The three roots of
$\bar{\Delta^{short}}+2\delta_{D_4^{(3)}}$ are the three larger roots in $D_4$
containing $\alpha_4'$, namely
\begin{align*}
  \alpha_1+2\alpha_{0112}
  &=\alpha_1+\alpha_{112}+\alpha_{00112}
  =\alpha_1'+\alpha_3'+\alpha_4'\\
  \alpha_{12}+2\alpha_{0112}
  &=\alpha_1+\alpha_2+\alpha_{112}+\alpha_{00112}
  =\alpha_1'+\alpha_2'+\alpha_3'+\alpha_4'\\
  \alpha_{112}+2\alpha_{0112}
  &=2\alpha_1+\alpha_2+\alpha_{112}+\alpha_{00112}
  =2\alpha_1'+\alpha_2'+\alpha_3'+\alpha_4'
\end{align*}
The three roots of $\bar{\Delta^{short}}+2\delta_{D_4^{(3)}}$ however
correspond to \emph{shifted, negative} versions of the remaining roots
$\alpha_4,\alpha_1'+\alpha_4,\alpha_1'+\alpha_2'+\alpha_4'$ in $D_4$, namely:
\begin{align*}
  \alpha_1+\alpha_{0112}
  &=-\alpha_1-\alpha_{2}-\alpha_{00112}+3\alpha_{0112}
  =-(\alpha_1'+\alpha_2'+\alpha_4')+\delta_{D_4^{(1)}}\\
  \alpha_{12}+\alpha_{0112}
   &=-\alpha_1-\alpha_{00112}+3\alpha_{0112}
  =-(\alpha_1'+\alpha_4')+\delta_{D_4^{(1)}}\\
  \alpha_{112}+\alpha_{0112}
   &=-\alpha_{00112}+3\alpha_{0112}
  =-\alpha_4'+\delta_{D_4^{(1)}}
\end{align*}
Hence the real roots of $D_4^{(1)}$ and $D_4^{(3)}$ coincide under our
correspondence.\\

Note however this \emph{fails} for the isotropic roots: In
$D_4^{(3)}$ we have isotropic roots $m\delta_{D_4^{(3)}}$ with 
multiplicities $1,1,2$ depending on $m\mod 3$, while in $D_4^{(3)}$ we have
isotropic roots $m\delta_{D_4^{(1)}}=3m\delta_{D_4^{(3)}}$ with multiplicity
$4$. The author does not have an explanation for this.\\

\subsection{Case
\texorpdfstring{$A_{2n}^{(2)}$ at $\ell=3,6$}{A2n(2) at l=3,6}
}~\\

For $\g=A^{(2)}_{2n}$ with $n\geq 2$ the braiding matrix is
$$\begin{pmatrix}
     q^2 & q^{-2} & 1 & \cdots & 1 & 1 \\
     q^{-2} & q^4 & q^{-2} & \cdots & \cdots & 1 \\
     1 & q^{-2} &  q^4  & q^{-2} & \cdots & \cdots \\
     \cdots & \cdots & q^{-2} & \cdots &\cdots & \cdots \\
     1 & \cdots & \cdots & \cdots & q^4 & q^{-4} \\
     1 & 1 & \cdots & \cdots & q^{-4} & q^8 \\
   \end{pmatrix}$$
We will verify that the root system and Weyl group in these cases remain
$A^{(2)}_{2n}$. This is slightly inconvenient, because we cannot present
$u_q^\L(\g)$ as isomorphic to a different $u_{q'}^\L(\g')$ with $\g',q'$
fulfilling Lusztig's non-degeneracy assumptions in Lemma
\nref{lm_LusztigGeneric}. On the other hand it just means this case is not
really degenerate (in contrast to $n=1$ above).\\

By \cite{Heck06} Sec. 3 the Cartan matrix of the Nichols algebra of a braided
vector space is given by $a_{ii}=2$ and $a_{ij}=-m_{ij}$ with
$$m_{ij}=\min\left\{m\;\mid\;(m+1)_{q_{ii}}(q_{ii}^mq_{ij}q_{ji}-1)=0\right\}$$ 
We have for all $i$ that $q_{ii}=q^2,q^4,q^8$ has order $3$ for $\ell=3,6$,
hence $(m+1)_{q_{ii}}=0$ for $m\geq 2$. The term $q_{ij}q_{ji}$
is $=1$ for non-adjacent $i,j$ and $q^{-4},q^{-8}$ with again order $3$ for
adjacent $i,j$. Hence $q_{ii}^mq_{ij}q_{ji}-1=0$ for $m=0$ and hence
$m_{ij}=0$ holds precisely for non-adjacent $i,j$. It remains to determine
when $q_{ii}^mq_{ij}q_{ji}-1=0$ for $m=1$ i.e. $q_{ii}(q_{ij}q_{ji})=1$.
Checking our $2\times 3$ cases we see this is only possible for
$q_{ii}=q^4,q_{ij}q_{ji}=q^{-4}$ and $q_{ii}=q^8,q_{ij}q_{ji}=q^{-8}$ and
$q_{ii}=q^2,q_{ij}q_{ji}=q^{-8},\ell=6$. The last case does not appear for
$n\geq 2$, the first case is quite frequent and the second case appears at the
last node. Altogether we find the Cartan matrix for $\ell=3,6$, and hence the
Weyl group, is unchanged:

$$a_{ij}=\begin{pmatrix}
     2 & -2 & 1 & \cdots & 0 & 0 \\
     -1 & 2 & -1 & \cdots & \cdots & 0 \\
     1 & -1 &  2  & -1 & \cdots & \cdots \\
     \cdots & \cdots & -1 & \cdots &\cdots & \cdots \\
     0 & \cdots & \cdots & \cdots & 2 & -2 \\
     0 & 0 & \cdots & \cdots & -1 & 2 \\
   \end{pmatrix}$$

\section{Deaffinized cases involving 
\texorpdfstring{$A_1^{(1)}$ at $\ell=4$}{A1(1) at l=4}
}\label{sec_A11}

Curiously, the case $\g=A_1^{(1)}=\hat{\sl}_2,\ell=4$ is in some sense the most
exceptional case, namely the only one not leading to a (maybe different)
affine Lie algebra. This is due to the fact, that all roots are short (hence
$u_q^\L$ is generated by all $E_{\alpha_i}$), but the Cartan matrix has entries
$\pm 2$ (hence the braiding matrix degenerates to $A_1\times A_1$), but the
Hopf algebra is not coradically graded (hence there are nontrivial lifting
relations). We shall analyze this case in the following section; not that this
in contained in several cases of larger rank in Lemma \nref{lm_primitive}.\\

We first describe the braiding matrix of $\g=A_1^{(1)}$ for $\ell=4$:
$$\begin{pmatrix}
    q^2 & q^{-2} \\
    q^{-2} & q^2 
  \end{pmatrix}
=\begin{pmatrix}
    -1 & -1 \\
    -1 & -1 
  \end{pmatrix}$$
This is not a braiding matrix of the form $q_{ij}=q'^{(\alpha_i,\alpha_j)}$, but
it is nevertheless of type $A_1\times A_1$ by
\cite{Heck06} with $q_{ii}=-1$ and $q_{ij}q_{ji}=-1$. This implies easily (e.g.
Lemma \nref{lm_primitive}a) that the
element in the isotropic degree $\delta=\alpha_0+\alpha_0$
$$E_{\alpha_0+\alpha_1}:=
E_{\alpha_0}E_{\alpha_1}+E_{\alpha_1}E_{\alpha_0}$$
is a new primitive element. For a coradically graded Hopf algebra (e.g. the
Nichols algebra) this would imply the commutator is zero as expected for
$A_1\times A_1$; the same holds for all other commutators between higher root
vectors and one might expect our $A_1^{(1)}$ to degenerate to $A_1^{\times
\infty}$. We will see that this is not true: While indeed $\gr(u_q^\L(\g))$
will be of type $A_1^{\times\infty}$ (one copy for each root), \emph{some}
nontrivial commutator relations will survive in $u_q^\L(\g)$.\\

To pursue our study, we need to calculate in an explicit PBW-basis by Beck for
quantum groups of \emph{untwisted} affine Lie algebras. The given alternative
set of generators and relations for $U^\L_q(\g)$ is essentially due to Drinfel'd
and emphasizes the construction of $\g$ from a loop algebra (see \cite{Beck94}
Thm 4.7):\\

Let $\bar{I}$ again denote the index set of the finite Lie algebra associated
to $\g$, then we have generators (``root vectors'') $x_{\alpha_i,j}^\pm$ in
degree $\pm\alpha_i$ and $h_{\alpha_i,k},k\neq 0$ in degree $k\delta$ and
$K_i^{\pm1}, C^{\pm\frac{1}{2}}, D$ in degree $0$. We do not give al relations
here. Now \cite{Beck94} Prop. 6.1 states that these elements form a PBW-basis
for $U_q^{\Q(q)}(\g)$, but also for the Lusztig integral form 
$U_q^{\Z[q,q^{-1}],\L}(\g)$ and hence the specialization $U_q^\L(\g)$.\\

Relation $(5)$ states that $[x_{\alpha_i,k}^+,x_{\alpha_i,l}^-]\neq 0$ in
any specialization $\ell\neq 1,2$; especially in our case
$$[x_{\alpha_1,0}^+,x_{\alpha_1,1}^-]=
C^{-\frac{1}{2}}K_{\alpha_1}\cdot h_{\alpha_1,1}
+C^{\frac{1}{2}}K_{\alpha_1}^{-1}\cdot h_{\alpha_1,1}$$
 On the other hand relations
$(2),(3)$ states that the commutators $[h_{\alpha_i,k},h_{\alpha_i,-k}]$ and
$[h_{\alpha_i,k},x_{\alpha_i,l}^\pm]$ contain a term $[ka_{ij}]_{q_i}$. This
implies in our case $a_{ij}=-2$ and $\ord(q_i)=\ord(q)=\ell=4$ that all
$h_{\alpha_i,k}$ are central in the specialization. Hence the algebra generated
by $x_{\alpha_1,0}^+,x_{\alpha_1,1}^-=E_1,E_0$ is finite-dimensional and has a
root system of type $A_2$. It is \emph{not coradically graded}, rather the
respective graded Nichols algebra is of type $A_1^{\times 3}$.\\ 

By definition, the reflections of $x_{\alpha_1,0}^+,x_{\alpha_1,1}^-$ are the
higher root vectors $x_{\alpha_1,k}^+,x_{\alpha_1,l}^-$. The formula in Lemma
\nref{lm_primitive} easily shows that they are not primitive. Hence
$u_q^\L{A_1^{1}}$ is \emph{not generated in degree $1$}. We have 
$$u_q^\L(A_1^{1})^+/\langle E_1,E_0\rangle\cong u_q^\L(A_1^{1})^+$$
and hence $u_q^\L(A_1^{1})^+$ is an infinite tower extension of $u_q^\L(A_2)$'s;
the higher order lifting relations can be read off directly from the cited
relations.

\section{Open Questions}\label{sec_questions}

We finally give some open questions that the author would find interesting:\\

Regarding the subsystem of long roots:
\begin{problem}
  Is there an easier systematic reason why the subsystem of long roots always
  returns the affinized subsystem of long roots of the corresponding finite Lie
  algebra - both for twisted and untwisted type?
\end{problem}

Regarding the limitations of this article:
\begin{problem}
  Explicitly determine the algebras $u_q^\L(\g)$ in terms of
  some $u_{q'}(\g^{(0)})$.
  \begin{itemize}
    \item Is the surjection $u_q^\L(\g)^+\to \B(M)=u_{q'}(\g^{(0)})^+$ also for
    twisted affine $\g$ an isomorphism? By construction this is true on the
    level of roots, but there seems at present to be no PBW-basis available in
    this case.
    \item Are there nontrivial liftings in the exotic cases? It seems this would
    require explicit calculations. 
    \item Present $u_q^\L(\g)$ as a quotient of the Drinfel'd double of
    $u_q^\L(\g)^+$.
  \end{itemize}
  More generally, describe the affine quantum groups in terms of coradical
  filtration, Nichols algebra and a Drinfel'd double construction. Can the
  representations be related to the Yetter-Drinfel'd modules of the Borel part?
  Can one clarify the impact of the second condition (odd cycles) in
  \cite{Lusz94}?
\end{problem}

Regarding the construction of a Frobenius homomorphism:
\begin{problem}
  The Hopf subalgebras $u_q^\L(\g)$ constructed in this article are a good
  candidate for the kernel of a \emph{Frobenius homomorphism}. In cases where we
  have a Frobenius homomorphism, namely \cite{Lusz94} for sufficiently large $q$
  and \cite{Len14c} for the remaining small $q$ and finite root system, it
  coincides. Is it possible to extend the approach in \cite{Len14c}, namely
  showing normality of this $u_q^\L(\g)$ and then analyzing the quotient, to
  construct a Frobenius homomorphism for affine quantum groups for arbitrary
  $q$? It is to be expected that the quotient is \emph{not} simply the
  universal enveloping of the same Lie algebra.
\end{problem}

Regarding an observation that puzzled the author during the work on this
article:
\begin{problem}
  While the subalgebras $u_q^\L(\g)^+$ exhaust by construction precisely the
  real roots, the \emph{multiplicities} of the isotropic roots sometimes do not
  coincide:
  \begin{itemize}
  \item  We have constructed a map $u_q^\L(D^{(1)}_n)^+\to
  u_q^\L(A_{2n-1}^{(2)})^+$ with coinciding $\delta$. However, for
  $A_{2n-1}^{(2)}$ the multiplicities are $1,1,2,1,1,2,\ldots$ while for
  $D_n^{(1)}$ they are as usual $2,2,2,2,\ldots$. A similar effect occurs for  
  $E_6^{(2)},D_4^{(3)}$. This could mean two  both unlikely things: Either the 
  map is not injective (but this is absurd for  a map from a Nichols algebra)
  or the isotropic root multiplicities in $U_q^\L(A_{2n-1}^{(2)})$ are not as
  for $A_{2n-1}^{(2)}$. This would be surprising, but note that the root system 
  itself does not determine the multiplicities, they are rather calculate from 
  the twisted presentation.
  \item In the exotic cases there seems a systematic behaviour: Take as example
  $D_4^{(3)}$ at $\ell=4$ which has a $D_4^{(1)}$ root system with
  $3\delta_{D_4^{(3)}}=\delta_{D_4^{(1)}}$. The isotropic root multiplicities
  are $1,1,2,1,1,3,\ldots$ respectively $0,0,4,0,0,4,\ldots$ - they seem just
  to be shifted, the same hold for the other exotic cases. Now: One can easily 
  obtain primitive elements (with trivial self-braiding) for the first
  isotropic roots in $D_4^{(3)}$, especially they are not in $D_4^{(1)}$, and
  they turn out to still reside ``inside'' the first set of commutator
  relations. Intuitively, do they extend $D_4^{(1)}$ from below? Are their
  powers or other commutators than the elements in degree $3\delta$ we observe
  in $D_4^{(1)}$?
  \end{itemize}
\end{problem}
 
Regarding parabolic subalgebras:
\begin{problem}
  Let $J\subset I$ a subset of simple roots, then we have a parabolic
  subalgebra $\g_J\subset \g$ of finite type and $U_q^\L(\g_J)\subset
  U_q^\L(\g)$ and $u_q^\L(\g_J)\subset u_q^\L(\g)$. By the  Radford
  projection theorem we can view  $u_q^\L(\g)^+$ as a Hopf algebra in  the
  category of $u_q^\L(\g_J)^+$-Yetter-Drinfel'd modules (hence morally a 
  module over $u_q^\L(\g_J)$, which is finite-dimensional). It would be
  extremely interesting to study this source of infinite-dimensional
  representations of finite quantum groups. The representation should decompose
  into infinitely many finite-dimensional modules associated to the finite root
  strings in $\g$.
\end{problem}

Regarding applications:
\begin{problem}
  Are there certain small values for the coupling constants in affine Toda
  theory such that the symmetry is described by the quantum affine algebras
  constructed here? Is this degeneracy physically visible? (as in the finite
  case $B_n,\ell=4$ for $n$ symplectic Fermions in \cite{Len14c})
\end{problem}
\begin{problem}
  It would be extremely interesting to study quantum groups for hyperbolic Lie
  algebras, at least in examples, by using techniques from Nichols algebras.
  Are there again ``deaffinized'' cases which decompose into an infinite tower
  of finite Lie algebras (or something related)?
\end{problem}
\begin{problem}
  It is a curious observation, that the degenerate cases $\g,q$ in \cite{Len14c}
  for $q$ finite (having h a root system different than $\g$) seem to match
  the list of Nichols algebras with root  system $\g$ and self-braiding $q$ over
  nonabelian groups constructed by the author in \cite{Len14a} as
  diagram-folding of different $u_q(\g')$; as if they  would be somehow
  replaced.\\
  One should use a presentation of affine Lie algebras by folding of other
  affine Lie algebras as indicated in Section \nref{subsec_affine} (this is
  not the same as the construction of twisted Lie algebras) and use this to
  construct quantum affine algebras with nonabelian Cartan part resp. affine
  Nichols algebras over nonabelian groups. Of special interest should be the
  case $A_n^{(1)}$ and $C_n^{(1)}$ which can be folded successively many times.
\end{problem}

%

\end{document}